%

\documentclass[draft]{amsart}

\newtheorem{thm}{Theorem}[section]
\newtheorem{cor}[thm]{Corollary}
\newtheorem{lem}[thm]{Lemma}
\newtheorem{prop}[thm]{Proposition}
\theoremstyle{definition}

\theoremstyle{remark}
\newtheorem{rem}[thm]{Remark}
\theoremstyle{definition}
\newtheorem{ex}[thm]{Example}


\newcommand{\CC}{\mathbb{C}}

\newcommand{\PP}{\mathbb{P}}

\newcommand{\reviewtimetoday}[2]{
\reviewtimetoday{\today}{Draft Version}

\usepackage{xypic}

\begin{document}

\title{On rational normal curves in projective space}

\author{E. Carlini, M. V. Catalisano}

\address{Enrico Carlini, Dipartimento di Matematica del Politecnico di Torino, Corso Duca degli Abruzzi 24, 10129 Turin, Italy.}
\email{enrico.carlini@polito.it}

\address{Maria Virginia Catalisano, DIPTEM - Dipartimento di Ingegneria della Produzione, Termoenergetica e Modelli
Matematici, Piazzale Kennedy, pad. D 16129 Genoa, Italy.}
\email{catalisano@dipem.unige.it}

\subjclass{ 14H45; 14N20 }

\date{}


\begin{abstract}
In this paper we consider a generalization of a well known result
by Veronese about rational normal curves. More precisely, given a
collection of linear spaces in $\PP^n$ we study the existence of
rational normal curves intersecting each component of the
configuration maximally. We introduce different methods to show
existence and non-existence of such curves. We also show how to
apply these techniques to the study of defectivity of
Segre-Veronese varieties.
\end{abstract}

\maketitle
\section{Introduction}
The starting point of our study is the following well know result
\begin{thm}[Veronese]\label{Veronese} Given $n+3$ generic points in $\PP^n$ there
exists a unique rational normal curve passing through them.
\end{thm}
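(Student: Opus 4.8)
The plan is to reduce the configuration to a normal form by a projectivity and then write the curve down explicitly, so that both existence and uniqueness become a short computation. A dimension count first makes the statement plausible and tells us what to expect: the rational normal curves in $\PP^n$ form a single $PGL_{n+1}$-orbit $PGL_{n+1}/\mathrm{Stab}(C_0)$ of dimension $(n+1)^2-1-3=n^2+2n-3$ (the stabilizer of a fixed curve $C_0$ being its reparametrization group $\mathrm{Aut}(\PP^1)\cong PGL_2$, of dimension $3$), while passing through a general point imposes $n-1$ conditions. Since $(n+3)(n-1)=n^2+2n-3$, the expected dimension of the family through $n+3$ general points is $0$, so we should look for finitely many, and hope for exactly one. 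To exploit this, I would first normalize: by the fundamental theorem of projective geometry, $n+2$ points in general position form a projective frame carried to the standard frame by a \emph{unique} element of $PGL_{n+1}$, so after applying a projectivity I may assume $n+2$ of the points are $e_0,\dots,e_n$ and $e_{n+1}=[1:\cdots:1]$, with the last point $p=[a_0:\cdots:a_n]$ having all $a_i$ nonzero and pairwise distinct (true for generic $p$). As projectivities biject rational normal curves, it suffices to count the curves through this normalized configuration.

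For existence I would exhibit the curve by hand. For distinct scalars $\alpha_0,\dots,\alpha_n$ consider
\[
\phi(t)=\Bigl[\textstyle\prod_{k\ne 0}(t-\alpha_k):\cdots:\prod_{k\ne n}(t-\alpha_k)\Bigr],
\]
whose components are linearly independent forms of degree $n$, so $\phi$ parametrizes a rational normal curve. One checks $\phi(\alpha_j)=e_j$ and, from the leading coefficients, $\phi(\infty)=[1:\cdots:1]=e_{n+1}$. Dividing all coordinates of $\phi(t)$ by $\prod_k(t-\alpha_k)$ turns the requirement $p\in C$ into solving $\tfrac{1}{\tau-\alpha_i}=\lambda a_i$ for some parameter $\tau$ and scalar $\lambda$, i.e. $\alpha_i=\tau-\tfrac{1}{\lambda a_i}$. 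Taking $\tau=0,\ \lambda=1$ gives $\alpha_i=-1/a_i$, which are distinct and nonzero because the $a_i$ are; this yields a genuine rational normal curve through all $n+3$ points.

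For uniqueness I would show every such curve arises this way. Parametrize an arbitrary rational normal curve $C$ through the configuration by $\psi=[\psi_0:\cdots:\psi_n]$, and after a M\"obius change of coordinate on $\PP^1$ arrange that the preimage of $e_{n+1}$ is $\infty$; set $\beta_i=\psi^{-1}(e_i)$. Passing through $e_j$ forces $\psi_i(\beta_j)=0$ for all $j\ne i$, and since $\psi(\infty)=e_{n+1}$ has every coordinate nonzero, no $\psi_i$ vanishes at $\infty$, so each $\psi_i$ has full degree $n$ with exactly these $n$ roots: $\psi_i(t)=c_i\prod_{k\ne i}(t-\beta_k)$. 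Then $\psi(\infty)=[c_0:\cdots:c_n]=e_{n+1}$ forces all $c_i$ equal, so $C$ has precisely the form above with $\alpha_i=\beta_i$, and $p\in C$ gives $\beta_i=\tau-\tfrac{1}{\lambda a_i}$ again. The apparent two-parameter freedom in $(\tau,\lambda)$ is exactly the two-dimensional group of affine reparametrizations $t\mapsto at+b$ fixing $\infty$, acting simply transitively via $\tau\mapsto a\tau+b,\ \lambda\mapsto\lambda/a$; hence all solutions define the same image curve, and $C$ is unique.

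The main obstacle, and the only place real care is needed, will be this last point: one must argue that the two-parameter ambiguity in the solution is pure reparametrization rather than genuine moduli. The clean way to see it is that fixing the $n+2$ frame points already rigidifies the parametrization up to the affine group fixing $\infty$, so the single remaining point $p$ cannot contribute any moduli. The general-position hypothesis enters precisely where it must: to bring the frame into standard form, and to ensure the $\alpha_i$ (equivalently the $a_i$) are distinct and nonzero so that $\phi$ is an embedding and not a degenerate map.
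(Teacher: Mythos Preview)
Your proof is correct and complete. It is essentially the classical constructive argument from Harris, which the paper itself cites as one of the known proofs but does not reproduce in full; your existence step coincides almost verbatim with the paper's Lemma~\ref{lemmaRNC}.

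The paper's \emph{own} proof, however, is genuinely different (Remark~\ref{VERONESEremark}): one takes the Segre variety $(\PP^1)^n$ and the birational map $\phi_{\mathbf{T}}:(\PP^1)^n\dashrightarrow\PP^n$ of Lemma~\ref{SEGREbirationalmap}, which contracts the $n$ divisors $\pi_i^{-1}(H_i)$ to $n$ points in general position. Proposition~\ref{SEGREcurveprop} with $r=n$, $n_1=\cdots=n_r=1$, $s=3$ then produces a rational curve on the Segre whose image is an rnc through these $n$ contracted points together with $3$ further generic points, giving $n+3$ in all. Remark~\ref{SEGREP1remark} is what makes this non-circular: when all $n_i=1$ the proof of Proposition~\ref{SEGREcurveprop} does not itself invoke Veronese's theorem. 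Note, though, that this Segre argument only establishes \emph{existence} (feasibility of the weight vector $(n+3,0,\dots,0)$); the paper does not extract uniqueness from it.

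So your approach is both more elementary and strictly stronger here, delivering existence and uniqueness in one self-contained computation. What the paper's Segre machinery buys instead is generality in the direction the paper actually cares about: the same Theorem~\ref{SEGREteo} immediately handles configurations containing higher-dimensional linear spaces, for which no analogue of your explicit normal-form construction is available.
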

Although often attributed to Castelnuovo, this theorem was proved
by Veronese \cite{Veronese}. Alternative proofs can be produced
using a constructive approach as shown in \cite[page 10]{Harris}
or via Cremona transformations as recalled in \cite[Theorem
4.1]{CaCa}. In fact, a new proof can be given using the methods of
this paper, see Remark \ref{VERONESEremark}.

Veronese's result can be generalized in different ways. For
example, one can consider more general varieties passing through
points, e.g. $d$-uple embeddings of the projective plane.
Kapranov, for example, studied a possible generalization of
Theorem \ref{Veronese} to Veronese surfaces in \cite{Kapranov},
see also \cite{GraberRanestad} for another proof of Kapranov's
result. A strictly related problem is treated in \cite{EiHuPo}
where the maximal number of intersection points of two Veronese
surfaces is determined. In a different direction, one can still
consider rational normal curves, but now satisfying more general
conditions. In the early twentieth century, for example, some
interest focused on rational normal curves passing through points
and intersecting codimension two linear spaces of $\PP^n$ in $n-1$
points, i.e. in the maximal number of points allowed, see
\cite[page 221]{Room},\cite{Wakeford} and \cite{Babbage}. In
\cite{CaCa} we considered this precise situation and we were able
to recover all the known results and to prove new ones. Our
analysis yielded the following:
\begin{thm}[Theorem 4.7 in \cite{CaCa}]\label{NOSTROcod2}
Let $n,p$ and $l$ be non-negative integers such that
$$n\geq
3, \ \ p\geq 1 \mbox{ and} \ \ p+l=n+3 .$$ Choose  $p$ generic
points in $\PP^n$ and $l$ generic codimension two linear spaces.
Then, for the values
\[
(p,l)=(n+3,0),(n+2,1),(3,n),(2,n+1),(1,n+2)
\]
does there exist a unique rational normal curve passing through
the points and $(n-1)$-secant to the linear spaces. In the other
cases, that is for
$p\geq 4$ and $l\geq 2$, no such curve exists.
\end{thm}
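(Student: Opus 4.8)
The plan is to begin from a dimension count that makes every listed case uniform. The rational normal curves of $\PP^n$ form an irreducible family of dimension $(n+3)(n-1)$, namely the orbit of the standard curve under $\mathrm{PGL}(n+1)$, whose stabilizer is the $\mathrm{PGL}(2)$ of reparametrizations; and both prescribed conditions are of codimension $n-1$. Passing through a point cuts the incidence down by $n-1$, while being $(n-1)$-secant to a codimension two space $\Lambda$ forces the pencil of hyperplanes through $\Lambda$ to cut on $C$ a series of the form (a fixed divisor of degree $n-1$) $+\,|\mathcal{O}_{\PP^1}(1)|$, again $n-1$ conditions. Since $p+l=n+3$, the expected dimension is $0$ in every case, so the entire content of the theorem is that the expected unique curve genuinely exists in the five listed cases and that the conditions become globally incompatible otherwise. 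I would therefore separate an existence/uniqueness part from an obstruction part.

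For existence when $l\le 1$ I would use the Cremona method recalled in \cite{CaCa}. Placing $n+1$ of the points at the coordinate vertices and applying the standard Cremona involution $\sigma$, the rational normal curves through those $n+1$ vertices are carried bijectively onto the lines of $\PP^n$. In the case $(n+3,0)$ the two remaining points go to two points and the unique line through them pulls back to the sought curve, reproving Theorem \ref{Veronese}; in the case $(n+2,1)$ the extra point becomes a point $Q$ and, after computing $\sigma(\Lambda)$, the $(n-1)$-secant condition becomes a single incidence condition selecting a unique line through $Q$, hence a unique curve. For the point-poor cases $(3,n),(2,n+1),(1,n+2)$ I would instead normalise the at most three points by $\mathrm{PGL}(n+1)$ and translate each secant condition into the requirement that a pencil inside $|\mathcal{O}_{\PP^1}(n)|$ have a prescribed common factor of degree $n-1$; writing a curve as $A\cdot\nu_n(\PP^1)$ with $A\in\mathrm{PGL}(n+1)$, this becomes a square linear interpolation problem on $\PP^1$ whose solvability and uniqueness reduce to the non-vanishing of a single determinant, to be verified on one explicit configuration. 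Alternatively, a Cremona transformation adapted to a codimension two space, trading one secant condition for one point condition, reduces these cases to the ones already settled.

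For the non-existence range $p\ge 4$, $l\ge 2$ the strategy is to reduce to the extreme case $(n+1,2)$ and then to a genuinely enumerative statement. Using the secant-for-point trade just mentioned I would lower $l$ to $2$, raising $p$ to $n+1$ while preserving $p+l=n+3$, so it suffices to rule out $(n+1,2)$. There I can base $\sigma$ at all $n+1$ points: a sought curve is carried to a line $L$, the two spaces become two auxiliary rational normal curves $D_1=\sigma(\Lambda_1)$ and $D_2=\sigma(\Lambda_2)$, each passing through the $n+1$ base points, and the two $(n-1)$-secant conditions become the requirement that $L$ be a chord of both $D_1$ and $D_2$. The heart of the matter, which I expect to be the main obstacle, is to show that two such auxiliary curves, general among those through the common base points, possess no common chord other than the $\binom{n+1}{2}$ lines joining pairs of base points; since those lines lie in the base locus of $\sigma$ they do not pull back to admissible rational normal curves, and non-existence follows.

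The difficulty in that last step is precisely that the naive count of common chords of two such curves is positive, so one must prove that the whole count is absorbed by the forced degenerate chords through the base points, an excess-intersection phenomenon. I would attack it first in the instance $n=3$, that is twisted cubics through four points and $2$-secant to two lines, where $D_1,D_2$ are twisted cubics through four common points and the six edges of the tetrahedron $P_1P_2P_3P_4$ account for all common chords, and then propagate the statement to general $n$, either by an inductive projection from a base point or by the same Cremona bookkeeping. Making the reduction of secant conditions to point conditions precise, and controlling the excess intersection in the chord count, are the two places where the real work lies.
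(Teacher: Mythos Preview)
The present paper does not actually prove this theorem: it is quoted from the authors' earlier work \cite{CaCa} (as ``Theorem 4.7 in \cite{CaCa}'') and used here as input. So there is no in-paper proof to compare against, only the indication that the Cremona method underlies the original argument (see the remark that Veronese's theorem can be recovered ``via Cremona transformations as recalled in \cite[Theorem 4.1]{CaCa}''). The current paper does re-derive fragments of the statement with its new tools---feasibility of $(n+3,0)$ and $(n+2,1)$ via Theorem~\ref{SEGREteo} (Remarks~\ref{VERONESEremark} and the one following it), and non-feasibility of $(4,0,\ldots,0,2)$ via Proposition~\ref{bezoutPROP}---but not the full dichotomy, and in particular not the existence for $(3,n),(2,n+1),(1,n+2)$ nor the non-existence for general $p\ge 4$, $l\ge 2$.

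Your outline is in the right spirit and matches what the paper attributes to \cite{CaCa}, but as written it is a plan rather than a proof, and you have honestly flagged the two genuine holes yourself. First, the ``secant-for-point trade'' you invoke twice---once to handle $(3,n),(2,n+1),(1,n+2)$ by reduction, once to reduce the non-existence range to $(n+1,2)$---is asserted but not constructed; you need to exhibit the birational map that exchanges an $(n-1)$-secant codimension-two condition for a point condition while preserving the class of rational normal curves and the genericity of the remaining data, and this is not automatic. Second, in the endgame for $(n+1,2)$ you correctly identify that the naive chord count for $D_1,D_2$ is positive and that the entire excess must be absorbed by the $\binom{n+1}{2}$ simplex edges; this excess-intersection statement is the crux and you have not supplied an argument for it, even in the base case $n=3$. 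Until those two pieces are written down, the proposal does not constitute a proof.
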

As an application of this result, we studied higher secant
deficiencies for some Segre-Veronese varieties, see \cite{CaCa}.
For more on Segre-Veronese varieties see \cite{CGG3,CGG4} and for
more on defective varieties \cite{Zak, Ge, Ci01}. In \cite{CaCa},
we also noticed that rational normal curves satisfying more
general condition could be useful, see Remark
\ref{SEGREVERONESErem}. More precisely, it is interesting and
useful to consider the following question
\begin{quote}{\it
given a finite collection of linear spaces in generic position in
$\PP^n$, is there a rational normal curve intersecting each
$i$-dimensional component of the collection in $i+1$ points?}
\end{quote}

This paper is devoted to the study of this last question. Giving a
complete answer appears to very ambitious and quite difficult and
so our result are only partial. Nevertheless, they do give some
understanding of the situation. For example, Corollary
\ref{finitelyCOR} shows that for any $n$ there are only finitely
many cases to consider. While Theorem \ref{SEGREteoiff} and
Proposition \ref{bezoutPROP} produce an answer in some of the
remaining cases. To understand the intrinsic difficulty of the
question, we notice that even when all the spaces have dimension
equal to one, there are open cases, see Subsection
\ref{HOMOGENEOUSsec}.

The relevant information to describe a generic collection of
linear spaces $\Lambda=\cup\Lambda_i$ can be encoded in a vector
of non-negative integers $L=(l_0,\ldots,l_{n-2})$ where $l_i$ is
the number of $i$-dimensional spaces in $\Lambda$. Now, given $L$,
should we expect a positive or a negative answer to our question?
We can develop some heuristic machinery to help our intuition.
Using the matrix representation of a rational normal curve, one
can easily see that the rational normal curves passing through a
given point are a codimension $n-1$ family in their natural
parameter space. The rational normal curves 2-secant to a given
line are a family of codimension $2(n-2)$. Similarly,  the family
of rational normal curves 3-secant to a given plane has
codimension $3(n-3)$. In conclusion, as the parameter space for
rational normal curves in $\PP^n$ has dimension $(n+3)(n-1)$,
given $\Lambda$ with relevant vector $(l_0,\ldots,l_{n-2})$, we
expect there to be a family, of rational normal curves, which has
dimension
\[(n+3)(n-1)-\sum_{i=0}^{n-2}l_i(i+1)(n-1-i),\]
to be $(i+1)$-secant to each $i$-dimensional component of
$\Lambda$. This kind of argument will be made rigorous and used to
derive results in Subsection \ref{paramterSUBSEC}. Since the
question is not clearly linear in nature, the actual dimension of
the family can very well be bigger or smaller than the expected
one.

In Section \ref{SECdef}, we introduce some notation for
configurations of linear spaces and their relevant vectors.

In Section \ref{SECfeasible}, we introduce methods to show
existence of rational normal curves.

In Section \ref{SECnonfeasible}, we describe methods to prove
non-feasibility.

In Section \ref{SECapp}, we apply the methods developed in the
paper to  study  some special situations and to produce results
about Segre-Veronese varieties and the behavior of their higher
secant varieties. More precisely, we study homogeneous
configurations, where all the components have the same dimension.
In the simplified situation when all linear spaces have dimension
equal to one, we have a complete answer except for five lines in
$\PP^5$ and six lines in $\PP^7$, see Proposition \ref{lineprop}.
We also study the configurations consisting of exactly one space
in each dimension, giving an answer for all cases, except for
$\PP^6$ and $\PP^7$, see Proposition \ref{onespaceineachPROP}. We
present a family of non-feasible configurations, not containing
points, which stems from the analysis of the lines case. Finally,
we show how to apply our results to find new defectivity results
for the Segre-Veronese varieties $\PP^1 \times \PP^m \times \PP^m$
embedded with multi-degree $(2,1,1)$. For more on Segre-Veronese
varieties see \cite{CGG3,CGG4}.

\begin{rem} Our question seems to be suitable for an Intersection
Theory approach and we posed the question to many experts in that
field. However, as far as we know, there is no
``ready-to-use-machinery" which can easily produce explicit
results.
\end{rem}

The authors wish to thank Gianfranco Casnati for stimulating
further investigations on the map used in Subsection
\ref{SEGREsection}. We also want to acknowledge Tony Geramita's
suggestion to look further into the lines case. His suggestion led
to Subsection \ref{HOMOGENEOUSsec}.

\section{Definition}\label{SECdef}

We work over the field of complex numbers $\mathbb{C}$. A rational
normal curve in $\PP^n$ will be denoted as an {\em rnc} in
$\PP^n$.

A vector of non-negative integers $L=(l_0,\ldots,l_{n-2})$ is
called a {\it weight vector in $\PP^n$}. A scheme
$\Lambda\subset\PP^n$ is said to be a {\it configuration of linear
spaces} of {\it weight} $L$ if $\Lambda$ is the union of $l_i$
linear spaces of dimension $i$, for $i=0,\ldots,n-2$; for we will
denote this configuration as $\Lambda_L$. We say that $\Lambda_L$
is generic if the linear components of $\Lambda$ are chosen in a
suitable open non-empty subset of a product of Grassmannians.

\begin{rem} There is a great deal of literature dealing with
{\em subspace arrangements} and these are nothing more than the
affine interpretation of what we call a configuration of linear
spaces, e.g. see \cite{Deb},\cite{BjPee} and \cite{Sid}. In
particular, in \cite{Derksen} the Hilbert polynomial of a generic
subspace arrangements is determined and this is a first step in
the direction of the study of Hilbert function. This is relevant
to us as the knowledge of the Hilbert function of a configuration
of linear spaces is needed for the application of Proposition
\ref{bezoutPROP}.
\end{rem}

We say that a configuration of linear spaces $\Lambda = \Lambda_1
\cup \dots \cup \Lambda_t \subset\PP^n$ is {\it feasible} if there
exists a rnc $\mathcal{C}\subset\PP^n$ maximally intersecting it,
i.e. if deg($ \mathcal{C}\cap \Lambda_i$) = $\dim \Lambda_i +1$,
for any  component $\Lambda_i$ of $\Lambda$.

We say that the vector $L$ is {\it feasible} if and only if a generic
configuration of linear spaces of weight $L$ is feasible.

\begin{rem} With this notation we can rephrase Theorem \ref{Veronese}as
follows:
\begin{quote}{\em
The weight vector $(p,0,\ldots,0)$ in $\PP^n$ is feasible if and
only if $p\leq n+3$. }\end{quote}

Similarly, Theorem \ref{NOSTROcod2} reads as follows:
\begin{quote}{\em
The weight vector $(p,0,\ldots,l),p\geq 1,$ in $\PP^n$ is feasible
if and only if $p\leq 4$ or $l\leq 2$. }\end{quote}
\end{rem}

\section{Methods for feasibility}\label{SECfeasible}

\subsection{Veronese's Theorem}
As a straightforward application of Theorem \ref{Veronese} we get
a useful numerical condition for feasibility.

\begin{prop}\label{numericalEXIST}
Let $L=(l_0,\ldots,l_{n-2})$ be a weight vector in $\PP^n$. If
\[\sum_{i=0}^{n-2} (i+1)l_i\leq n+3,\]
then $L$ is feasible.
\end{prop}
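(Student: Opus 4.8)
The plan is to reduce feasibility to Veronese's Theorem \ref{Veronese} by producing, on a generic configuration of weight $L$, a distribution of points through which the unique rnc on $n+3$ generic points is forced to pass. Set $s=\sum_{i=0}^{n-2}(i+1)l_i$, so by hypothesis $s\leq n+3$. First I would organize the parameter space. Let $G=\prod_{i}G(i,n)^{l_i}$ be the product of Grassmannians parametrizing configurations of weight $L$, and consider the incidence variety $Z$ whose points are pairs $(\Lambda,(P_1,\dots,P_{n+3}))$, where $\Lambda=\bigcup \Lambda^{(i,j)}\in G$, the first $s$ points are grouped into blocks of size $i+1$ lying on the corresponding component $\Lambda^{(i,j)}$, and $P_{s+1},\dots,P_{n+3}$ are arbitrary points of $\PP^n$. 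Since each fibre over $G$ is a product of projective spaces, hence irreducible, $Z$ is irreducible and the forgetful map $Z\to G$ is dominant.

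Next I would invoke genericity in two directions. The second projection $Z\to(\PP^n)^{n+3}$ is dominant: given generic points $P_1,\dots,P_{n+3}$, grouping the first $s$ of them into blocks and taking $\Lambda^{(i,j)}$ to be the span of the $j$-th block of $i+1$ points produces a valid configuration (since $i\leq n-2$, generic points span a linear space of the expected dimension $i$). Hence the preimage $V\subset Z$ of the open set $U\subset(\PP^n)^{n+3}$ on which Theorem \ref{Veronese} applies is a nonempty open, so dense, subset of the irreducible variety $Z$. For any $(\Lambda,(P_\bullet))\in V$ the unique rnc $\mathcal{C}$ through $P_1,\dots,P_{n+3}$ passes through the $i+1$ points of each block, so it meets every component $\Lambda^{(i,j)}$ in at least $i+1$ points. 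Because any $k\leq n+1$ points of an rnc span a $\PP^{k-1}$, and $i+2\leq n$, the curve cannot meet a $\PP^i$ in more than $i+1$ points; thus $\mathcal{C}$ intersects each component maximally and $\Lambda$ is feasible.

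Finally I would transfer this conclusion back to $G$. The image of $V$ under the dominant map $Z\to G$ is constructible by Chevalley's theorem and dense, because $V$ is dense in the irreducible variety $Z$; a dense constructible set contains a nonempty Zariski-open subset of $G$. Since every configuration in the image of $V$ is feasible, the generic configuration of weight $L$ is feasible, which is precisely the assertion that $L$ is feasible. The main point requiring care is the bookkeeping that links the two notions of genericity at play—generic points of $\PP^n$, needed to apply Veronese's theorem, versus a generic configuration of linear spaces, needed for feasibility—and this is exactly what the double dominance of $Z$ over $(\PP^n)^{n+3}$ and over $G$ is designed to handle.
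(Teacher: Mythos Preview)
Your proof is correct and follows the same idea as the paper's---choose $i+1$ points on each $i$-dimensional component, complete to $n+3$ points, and apply Veronese's Theorem \ref{Veronese}. The paper dispatches this in two lines, whereas you supply the genericity bookkeeping (the incidence variety $Z$ with its two dominant projections, and the observation that an rnc cannot meet a $\PP^i$ in more than $i+1$ points) that the paper's proof leaves implicit.
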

\begin{proof}
Given a generic configuration of linear spaces $\Lambda_L$, choose
$i+1$ generic points on each $i$-dimensional component of
$\Lambda_L$ and  apply Theorem \ref{Veronese}.
\end{proof}

\begin{rem} Notice that the numerical condition of the previous
Proposition is sufficient, but not necessary. Consider, for example, the
weight vector $L=(5,1)$ in $\PP^3.$
By Theorem \ref{NOSTROcod2} we know that $L$ is feasible, see
\cite{CaCa} for more details. However,
\[\sum_{i=0}^{n-2} (i+1)l_i=7> n+3=6\]
and feasibility can not be deduced by Proposition \ref{numericalEXIST}.
\end{rem}

\subsection{Segre varieties}\label{SEGREsection}
In this subsection, we use Segre varieties as a tool to verify
feasibility. We begin with a study of Segre varieties and then we
show how they are involved in the study of rncs. In particular, we
start from the description of an explicit parametrization of these
varieties which can be found in  \cite[pg.84]{CGG3}. Then, we
study the existence of rational curves with special properties on
the Segre varieties. Coupling these results we can prove
feasibility results.

We now fix notation for this section: for positive integers
$n_1\leq n_2\ldots\leq n_r$, we let $n=n_1+ n_2\ldots+ n_r$.
Consider the $n$ dimensional abstract Segre variety
\[\mathbf{S}=\PP^{n_1}\times\ldots\times\PP^{n_r}\]
and its Segre embedding
\[\mathbf{S}\hookrightarrow\mathcal{S}\subset\PP^N\]
where $N=(n_1+1)\cdot\ldots\cdot(n_r+1)-1$.

Given hyperplanes
\[H_1\subset\PP^{n_1},\ldots,H_r\subset\PP^{n_r}\]
we denote by $\mathbf{T}$ the Segre variety
\[\mathbf{T}=H_1\times\ldots\times H_r\subset\mathbf{S},\]
and by  $\mathcal{I}_\mathbf{T}$  its ideal sheaf.
Consider $(\mathcal{I}_\mathbf{T})^{r-1}$ in multidegree
$(1,\ldots,1)$, let
\[
\mathcal{L}=
H^0(\mathbf{S},
(\mathcal{I}_\mathbf{T})^{r-1}(1,\ldots,1)) ,
\]
and let $\phi_\mathbf{T}$ be the standard map:
\[\phi_\mathbf{T}:\mathbf{S}\rightarrow\PP(\mathcal{L})^{*} .\]
Recall that the
Chow ring of the Segre variety is
\[CH(\mathbf{S})={\mathbb{Z}[h_1,\ldots,h_r]\over (h_1^{n_1+1},\ldots,h_r^{n_r+1})} , \]
where the classes $h_i$ are the pull backs of the hyperplane
classes via the standard projection maps
\[\pi_i:\mathbf{S}\longrightarrow\PP^{n_i}.\]
Let
\[\Lambda_i=\phi_\mathbf{T}(\pi_i^{-1}(H_i)) \subset \PP^n ,
\ \ (i=1,\ldots,r) \ .\]

\begin{lem}\label{SEGREbirationalmap}
The map $\phi_\mathbf{T}$ is a birational map,
\[ \dim_\mathbb{C} \ \mathcal{L}  =n+1 \]
where $n=\sum n_i$,  and the indeterminacy locus is
\[\mathbf{B}=\bigcup_{1\leq i<j\leq r}\pi_i^{-1}(H_i)\cap\pi_j^{-1}(H_j).\]
\end{lem}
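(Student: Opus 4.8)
The plan is to make everything explicit in multihomogeneous coordinates and to reduce the three assertions to a monomial count together with a computation on a single affine chart. Choose coordinates $x_0^{(i)},\ldots,x_{n_i}^{(i)}$ on each factor $\PP^{n_i}$ so that $H_i=\{x_0^{(i)}=0\}$. Then $\mathbf{T}=H_1\times\ldots\times H_r$ is the common zero locus of $x_0^{(1)},\ldots,x_0^{(r)}$; since these are one linear form from each distinct factor they form a regular sequence, so $\mathbf{T}$ is a complete intersection and $\mathcal{I}_\mathbf{T}=(x_0^{(1)},\ldots,x_0^{(r)})$ in the multigraded coordinate ring. Because $\mathbf{T}$ is a complete intersection the sheaf power agrees with the ideal power, so $\mathcal{L}=H^0(\mathbf{S},(\mathcal{I}_\mathbf{T})^{r-1}(1,\ldots,1))$ is the space of multidegree $(1,\ldots,1)$ elements of the monomial ideal $(x_0^{(1)},\ldots,x_0^{(r)})^{r-1}$.

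First I would compute $\mathcal{L}$. A basis of $H^0(\mathbf{S},\mathcal{O}(1,\ldots,1))$ is given by the multilinear monomials $x_{j_1}^{(1)}\cdots x_{j_r}^{(r)}$ with $0\le j_i\le n_i$. Such a monomial lies in $(x_0^{(1)},\ldots,x_0^{(r)})^{r-1}$ if and only if it is divisible by a monomial generator of that power; since the monomial is linear in each factor the generator must be squarefree, i.e.\ a product of $r-1$ distinct variables $x_0^{(i)}$, which forces $j_i=0$ for at least $r-1$ indices $i$. Hence at most one index $j_i$ may be nonzero, and the surviving monomials are exactly $s_0=x_0^{(1)}\cdots x_0^{(r)}$ together with $s_{k,j}=x_j^{(k)}\prod_{i\neq k}x_0^{(i)}$ for $1\le k\le r$ and $1\le j\le n_k$. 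Counting yields $1+\sum_{k=1}^r n_k=n+1$ monomials, so $\dim_\CC\mathcal{L}=n+1$.

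Next, the indeterminacy locus is the common vanishing of these $n+1$ sections. Writing a point of $\mathbf{S}$ as $([p_1],\ldots,[p_r])$, I would check directly that all of $s_0$ and the $s_{k,j}$ vanish there precisely when at least two of the $p_i$ lie on the corresponding $H_i$: each section contains the factor $x_0^{(i)}$ for all but at most one index $i$, so if $p_{i_1}\in H_{i_1}$ and $p_{i_2}\in H_{i_2}$ with $i_1\neq i_2$ then every section acquires a vanishing factor; conversely, if at most one $p_i$ lies on $H_i$ then either $s_0$ or some $s_{i_0,j}$ is nonzero. This identifies the base locus with $\mathbf{B}=\bigcup_{1\le i<j\le r}\pi_i^{-1}(H_i)\cap\pi_j^{-1}(H_j)$. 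Finally, on the open set $U=\{x_0^{(i)}\neq 0,\ i=1,\ldots,r\}$ I would normalize $x_0^{(i)}=1$ and set $u_j^{(i)}=x_j^{(i)}/x_0^{(i)}$; then $\phi_\mathbf{T}$ restricts to $([p_1],\ldots,[p_r])\mapsto[1:u_1^{(1)}:\cdots:u_{n_r}^{(r)}]$, which is an isomorphism of $U\cong\affine^n$ onto the affine chart $\{s_0\neq 0\}\cong\affine^n$ of $\PP(\mathcal{L})^{*}=\PP^n$. Since $\dim\mathbf{S}=\dim\PP(\mathcal{L})^{*}=n$, this proves that $\phi_\mathbf{T}$ is birational.

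The only delicate point, and the one I regard as the \emph{main obstacle}, is the correct identification of $\mathcal{L}$: one must confirm that the sheaf power $(\mathcal{I}_\mathbf{T})^{r-1}$ is computed by the ideal power (this is exactly where the complete-intersection property of $\mathbf{T}$ is used) and that, in multidegree $(1,\ldots,1)$, a monomial belongs to the $(r-1)$-st power precisely when at most one of its indices is nonzero. Once the explicit basis $\{s_0,s_{k,j}\}$ is available, the dimension count, the description of $\mathbf{B}$, and the birationality all follow immediately from the affine chart computation.
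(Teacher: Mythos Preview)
Your proof is correct and follows essentially the same route as the paper: choose coordinates so that $H_i=\{x_0^{(i)}=0\}$, identify $\mathcal{L}$ with the multidegree $(1,\ldots,1)$ part of $(x_0^{(1)},\ldots,x_0^{(r)})^{r-1}$, list the $n+1$ surviving monomials, read off the base locus, and write the map on the affine chart $\{x_0^{(i)}\neq 0\}$ to see birationality. The only difference is that you supply an explicit justification (regular sequence, hence complete intersection, hence sheaf power equals ideal power and the graded piece computes $H^0$) for the identification of $\mathcal{L}$, a step the paper simply asserts; this is a welcome addition rather than a departure in strategy.
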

\begin{proof}
Consider the coordinate ring of the Segre variety $\mathbf{S}$
\[R=\mathbb{C}[x_0^{(1)},\ldots,x_{n_1}^{(1)},x_0^{(2)},\ldots,x_{n_2}^{(2)},\ldots,x_0^{(r)},\ldots,x_{n_r}^{(r)}]\]
and assume that  $H_i = \{x_0^{(i)}=0\}$,  for $1 \leq i \leq r$.
Then we have that
$\mathcal{L}$
is
the multidegree $(1,\ldots,1)$ part of the ideal
\[(x_0^{(1)},\ldots,x_0^{(r)})^{r-1}\subset R,\]
which is generated by $n+1$ linear independent  forms
\[x_0^{(1)}\cdot\ldots\cdot x_0^{(r)},\]
\[x_1^{(1)}x_0^{(2)}\cdot\ldots\cdot x_0^{(r)},\ldots,
x_{n_1}^{(1)}x_0^{(2)}\cdot\ldots\cdot x_0^{(r)},\]
\[\vdots\]
\[x_0^{(1)}\cdot\ldots\cdot x_0^{(r-1)}x_{1}^{(r)},\ldots,x_0^{(1)}\cdot\ldots\cdot x_0^{(r-1)}x_{n_r}^{(r)}.\]
Hence  the base locus of
$\mathcal{L}$ is
the collection of points $Q_1\times\ldots\times Q_r$ such that at
least two of them, say $Q_i\in\PP^{n_i}$ and $Q_j\in\PP^{n_j}$,  lie on the hyperplanes $H_i $ and $ H_j$, respectively. Thus the base locus is
\[\mathbf{B}=\bigcup_{1\leq i<j\leq r}\pi_i^{-1}(H_i)\cap\pi_j^{-1}(H_j).\]
Following \cite{CGG3}, for $Q \in \mathbf{S}\setminus  \mathbf{B}$, we may write the map
$\phi_\mathbf{T}$
as
\begin{eqnarray}\label{SEGREmapincoordinateHOM}
\phi_\mathbf{T}(Q)& = &
[x_0^{(1)}\cdot\ldots\cdot x_0^{(r)},
x_1^{(1)}x_0^{(2)}\cdot\ldots\cdot x_0^{(r)},
\ldots,
x_{n_1}^{(1)}x_0^{(2)}\cdot\ldots\cdot x_0^{(r)}, \ldots,\\
\nonumber
&  &
x_0^{(1)}\cdot\ldots\cdot x_0^{(r-1)}x_{1}^{(r)},
\ldots,
x_0^{(1)}\cdot\ldots\cdot x_0^{(r-1)}x_{n_r}^{(r)}].
\end{eqnarray}
Hence for points
\[Q=Q_1\times\ldots\times Q_r\in\mathbf{S}\setminus\bigcup_{i=1}^r\pi_i^{-1}(H_i),\]
if we let
$Q_i=[1, q_{1}^{(i)}, \ldots , q_{n_i}^{(i)}]$, the map $\phi_\mathbf{T}$
can be nicely written as
\begin{equation}\label{SEGREmapincoordinate1}
\phi_\mathbf{T}(Q)=[1, q_{1}^{(1)}, \ldots , q_{n_i}^{(1)}, q_{1}^{(2)}, \ldots ,
q_{n_{r-1}}^{(r-1)}, q_{1}^{(r)},\ldots, q_{n_r}^{(r)}] ,
\end{equation}
and this is enough to show that the map is birational (see
\cite{CGG3} for more details).
\end{proof}

\begin{lem}\label{SEGREprojectionmap}
There exists a linear space $\Sigma\subset\PP^N$ such that, for a suitable choice of the
basis of $\mathcal{L}$,
$\phi_\mathbf{T}$ is induced by the projection from $\Sigma$, that is
we have a commutative diagram
\[\xymatrix{
\mathbf{S} \ar@{^{(}->}[r] \ar@{-->}[d]_{\phi_\mathbf{T}}& \mathcal{S}\subset\PP^N\ar@{->}[ld]^{ \pi_\Sigma} \\
\PP(\mathcal{L})^{*}\simeq\PP^n & &}
\]
where $\pi_\Sigma$ denotes the projection map from $\Sigma$.
Moreover, $\Sigma\cap\mathcal{S}$ is the Segre embedding of
$\mathbf{B}$.
\end{lem}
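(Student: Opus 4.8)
The plan is to realize $\phi_\mathbf{T}$ as a linear projection, exploiting the fact that $\mathcal{L}$ sits inside the full space $W=H^0(\mathbf{S},\mathcal{O}_\mathbf{S}(1,\ldots,1))$ of multidegree $(1,\ldots,1)$ forms, which is precisely the linear system defining the Segre embedding $\iota:\mathbf{S}\hookrightarrow\mathcal{S}\subset\PP^N$. First I would take the basis of $W$ given by the monomials $x_{i_1}^{(1)}\cdots x_{i_r}^{(r)}$, so that the homogeneous coordinates on $\PP^N=\PP(W)^*$ are indexed by these monomials. The key observation is that, by Lemma \ref{SEGREbirationalmap}, the $(n+1)$-dimensional space $\mathcal{L}$ is spanned by $n+1$ of these very monomials; reordering so that they come first realizes $\mathcal{L}$ as a coordinate subspace of $W$.

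With this choice the inclusion $\mathcal{L}\hookrightarrow W$ dualizes to the restriction surjection $W^*\to\mathcal{L}^*$, whose projectivization is the linear projection $\pi_\Sigma:\PP(W)^*\to\PP(\mathcal{L})^*$ with center $\Sigma=\PP(\ker(W^*\to\mathcal{L}^*))$. Concretely $\Sigma$ is the coordinate subspace of $\PP^N$ cut out by the vanishing of the $n+1$ distinguished coordinates, so $\dim\Sigma=N-n-1$, and $\pi_\Sigma$ is the map retaining exactly those coordinates. Commutativity on $\mathbf{S}\setminus\mathbf{B}$ is then immediate: for $Q\in\mathbf{S}$ the point $\iota(Q)$ has as coordinates the values of all the monomials at $Q$, and applying $\pi_\Sigma$ reads off just the $n+1$ distinguished ones, which is exactly the expression (\ref{SEGREmapincoordinateHOM}) for $\phi_\mathbf{T}(Q)$. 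Since the base locus of $\mathcal{L}$ is $\mathbf{B}$ by Lemma \ref{SEGREbirationalmap}, the composite $\pi_\Sigma\circ\iota$ is defined precisely off $\mathbf{B}$, matching the domain of $\phi_\mathbf{T}$.

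The final assertion follows from the same coordinate description: a point $\iota(Q)\in\mathcal{S}$ lies on $\Sigma$ if and only if its $n+1$ distinguished coordinates vanish, that is, if and only if every monomial spanning $\mathcal{L}$ vanishes at $Q$, which by Lemma \ref{SEGREbirationalmap} means $Q\in\mathbf{B}$. Hence $\Sigma\cap\mathcal{S}=\iota(\mathbf{B})$, identifying $\Sigma\cap\mathcal{S}$ with the Segre embedding of $\mathbf{B}$.

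This is in essence projective duality applied to the subsystem $\mathcal{L}\subset W$, so I do not anticipate a genuine obstacle; the only care needed is the bookkeeping that the $n+1$ distinguished coordinates coincide with the monomials spanning $\mathcal{L}$ produced in Lemma \ref{SEGREbirationalmap}, and that $\Sigma$ meets $\mathcal{S}$ only along $\iota(\mathbf{B})$. Should one want the identity $\Sigma\cap\mathcal{S}=\iota(\mathbf{B})$ scheme-theoretically rather than set-theoretically, one would additionally check that the monomials of $\mathcal{L}$ generate the ideal of $\mathbf{B}$ in the appropriate degree; for the applications in this paper the set-theoretic statement suffices.
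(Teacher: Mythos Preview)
Your proof is correct and follows essentially the same approach as the paper: both recognize that $\mathcal{L}$ is spanned by $n+1$ of the monomial coordinates on $\PP^N$, so $\phi_\mathbf{T}$ is the projection from the complementary coordinate subspace $\Sigma$, and both identify $\Sigma\cap\mathcal{S}$ with $\iota(\mathbf{B})$ by observing that the vanishing of those $n+1$ monomials characterizes $\mathbf{B}$. The paper simply writes down $\Sigma$ explicitly by its defining equations, whereas you phrase the same construction via the duality $\mathcal{L}\hookrightarrow W$ inducing $W^*\twoheadrightarrow\mathcal{L}^*$; the content is identical.
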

\begin{proof}
By rearranging the forms of (\ref{SEGREmapincoordinateHOM})   in lexicographic order,
we write the map
$\phi_\mathbf{T}$ as
\begin{eqnarray}
\phi_\mathbf{T}(Q)& = &
\nonumber
[x_0^{(1)}\cdot\ldots\cdot x_0^{(r)},x_0^{(1)}\cdot\ldots\cdot
x_0^{(r-1)}x_{1}^{(r)},
\ldots,
x_0^{(1)}\cdot\ldots\cdot x_0^{(r-1)}x_{n_r}^{(r)}, \ldots,\\
\nonumber
&  &
x_1^{(1)}x_0^{(2)}\cdot\ldots\cdot x_0^{(r)},
\ldots,
x_{n_1}^{(1)}x_0^{(2)}\cdot\ldots\cdot x_0^{(r)}],
\end{eqnarray}
where $Q\in\mathbf{S}\setminus\mathbf{B}$. As the Segre embedding
$\mathbf{S}\hookrightarrow\mathcal{S}\subset\PP^N$ is defined by
all the multidegree $(1,\ldots,1)$ forms in the coordinate ring
$R$, it is enough to consider
\[\Sigma=\{x_0^{(1)}\cdot\ldots\cdot x_0^{(r)}=
x_1^{(1)} x_0^{(2)}\cdot\ldots\cdot x_0^{(r)}=
\ldots=
x_0^{(1)}\cdot\ldots\cdot x_0^{(r-1)}x_{n_r}^{(r)}=0\}\subset\PP^N.\] The computation of
$\Sigma\cap\mathcal{S}$ is straightforward.
\end{proof}

As we know that $\phi_{\mathbf{T}}$ is a birational map it is
interesting to investigate subvarieties which are contracted by
this map.

\begin{lem}\label{SEGREcurvelemma}
The space
\[\Lambda_i=\phi_\mathbf{T}(\pi_i^{-1}(H_i))\subset \PP^n ,
( i=1,\ldots,r )\]
is a $n_i-1$ dimensional linear space. Moreover,
 $\Lambda_1, \dots , \Lambda_r$ are  generic linear spaces
in $\PP^n$.
\end{lem}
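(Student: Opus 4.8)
The plan is to understand the image $\Lambda_i = \phi_{\mathbf{T}}(\pi_i^{-1}(H_i))$ by exploiting the explicit coordinate description of $\phi_{\mathbf{T}}$ from Lemma \ref{SEGREbirationalmap}. Recall that $H_i = \{x_0^{(i)}=0\}$, so the divisor $\pi_i^{-1}(H_i) \subset \mathbf{S}$ is cut out by the single equation $x_0^{(i)}=0$. The key observation is that every generator of $\mathcal{L}$ listed in the proof of Lemma \ref{SEGREbirationalmap} carries a factor $x_0^{(i)}$, \emph{except} for the $n_i$ forms of the block belonging to the $i$-th factor, namely $x_1^{(i)} x_0^{(1)}\cdots\widehat{x_0^{(i)}}\cdots x_0^{(r)}, \ldots, x_{n_i}^{(i)}x_0^{(1)}\cdots\widehat{x_0^{(i)}}\cdots x_0^{(r)}$, where the hat denotes omission. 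Therefore, restricting to $\{x_0^{(i)}=0\}$ kills all the other generators and leaves only these $n_i$ forms (up to the remaining $x_0^{(j)}$ factors). First I would write this restriction out explicitly and conclude that $\phi_{\mathbf{T}}$ maps $\pi_i^{-1}(H_i)$ into the coordinate linear space spanned by the $n_i$ corresponding coordinate points of $\PP^n$, which has dimension $n_i-1$.

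To pin down that the image is genuinely $(n_i-1)$-dimensional (and not smaller), I would check that the $n_i$ surviving forms are linearly independent when restricted to $\pi_i^{-1}(H_i)$ away from the base locus $\mathbf{B}$. This follows because on the open set where all $x_0^{(j)}=1$ for $j\neq i$, the map sends a point with $i$-th factor $[0, q_1^{(i)}, \ldots, q_{n_i}^{(i)}]$ to a point whose nonzero coordinates are precisely $q_1^{(i)}, \ldots, q_{n_i}^{(i)}$, so the affine cone of the image is all of $\CC^{n_i}$; hence the projectivized image is a full $(n_i-1)$-dimensional linear space. The linearity is automatic since the image lies in a coordinate subspace and the coordinates sweep out everything.

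The second assertion, that $\Lambda_1, \dots, \Lambda_r$ are generic linear spaces in $\PP^n$, is the more delicate point and I expect it to be the main obstacle. The naive coordinate computation above places the $\Lambda_i$ in mutually disjoint coordinate subspaces, which is a very special, non-generic configuration; this reflects a \emph{special} choice of the hyperplanes $H_i$ and of the basis of $\mathcal{L}$. The resolution is to observe that the construction depends on the choice of hyperplanes $H_1, \ldots, H_r$, and that as these vary in an open dense subset of the product of dual projective spaces, the induced linear spaces $\Lambda_i \subset \PP^n$ vary as well. The plan is to argue that varying the $H_i$ (together with an overall change of coordinates on $\mathbf{S}$, equivalently acting by the automorphism group $\prod_i PGL(n_i+1)$) produces, via $\phi_{\mathbf{T}}$, a family of configurations $(\Lambda_1,\ldots,\Lambda_r)$ that dominates an open subset of the corresponding product of Grassmannians.

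Concretely, I would count dimensions: the automorphisms of $\mathbf{S}$ preserving the Segre structure act on the target $\PP^n$, and a parameter count of the hyperplane choices against the dimension of the space of $r$-tuples of $(n_i-1)$-planes in generic position should show the map on parameters is dominant, whence a generic tuple $(\Lambda_i)$ is realized. The cleanest route may instead be to invoke the birationality from Lemma \ref{SEGREbirationalmap} together with Lemma \ref{SEGREprojectionmap}: since $\Sigma \cap \mathcal{S}$ is the Segre embedding of $\mathbf{B}$ and $\phi_{\mathbf{T}}$ is projection from $\Sigma$, the images $\Lambda_i$ are the projections of the linear spans of the contracted divisors, and genericity of the $\Lambda_i$ translates into genericity of the flats $\langle \pi_i^{-1}(H_i)\rangle$ relative to $\Sigma$, which holds for generic $H_i$. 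I would make this last reduction precise and treat it as the crux, deferring the routine verification that a generic choice of hyperplanes indeed yields linear spaces in general position.
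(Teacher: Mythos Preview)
Your treatment of the first assertion is essentially the paper's: restrict the explicit generators of $\mathcal{L}$ to $\{x_0^{(i)}=0\}$, observe that only the $n_i$ forms of the $i$-th block survive, and conclude that $\Lambda_i$ is the coordinate $(n_i-1)$-plane spanned by the corresponding $n_i$ coordinate points of $\PP^n$. That part is fine.

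The gap is in the second assertion. You assert that ``mutually disjoint coordinate subspaces'' is ``a very special, non-generic configuration'' and then propose to repair this by varying the $H_i$ and performing a dimension count. This diagnosis is mistaken, and the proposed repair is circular. The point the paper makes (in one sentence) is precisely the opposite: because the $\Lambda_i$ are spans of \emph{pairwise disjoint} sets of coordinate points, with $\sum n_i=n$ points altogether, every subcollection $\{\Lambda_i\}_{i\in S}$ spans a linear space of the maximal possible dimension $\sum_{i\in S} n_i - 1$, and every pair $\Lambda_i,\Lambda_j$ is disjoint. In other words, this coordinate configuration already lies in the open $PGL(n+1)$-orbit of tuples in linear general position, hence is generic in the sense used throughout the paper. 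No variation of the $H_i$ is needed.

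Moreover, your proposed workaround does not actually escape this observation: changing the hyperplanes $H_i$ (equivalently, acting by $\prod_i PGL(n_i+1)$ on $\mathbf{S}$) only amounts to a linear change of coordinates on the target $\PP^n$, so the resulting $\Lambda_i$'s are always projectively equivalent to the fixed coordinate configuration. Your parameter count would therefore compute the dimension of a single $PGL(n+1)$-orbit, and to conclude that this orbit is open you would still need exactly the general-position statement you were trying to avoid.
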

\begin{proof}
As in the proof of Lemma \ref{SEGREbirationalmap}, we assume that
$\pi_i^{-1}(H_i)$ is defined by $\{x_0^{(i)}=0\}$ on the Segre
variety, and  that $\phi_\mathbf{T}$ is as in (\ref{SEGREmapincoordinateHOM}).

Consider the coordinate ring of $\PP^n$
\[\mathbb{C}[y_0, y_1^{(1)}, \ldots, y_{n_1}^{(1)}, \ldots, y_1^{(r)}, \ldots, y_{n_r}^{(r)}].\]
We  have that
\[\Lambda_1\mbox{ is defined by }\{y_0=y_1^{(2)}=\ldots=y_{n_2}^{(2)}=\ldots=y_1^{(r)}=\ldots=y_{n_r}^{(r)}=0\},\]
\[\vdots\]
\[\Lambda_r\mbox{ is defined by }\{y_0=y_1^{(1)}=\ldots=y_{n_1}^{(1)}=\ldots=y_1^{(r-1)}=\ldots=y_{n_{r-1}}^{(r-1)}=0\}.\]
Thus, the spaces $\Lambda_i$ are linear spaces of dimension
$n_i-1$. Notice that each $\Lambda_i$ is the span of a set of
coordinate points and that each of these coordinate points only
belongs to one $\Lambda_i$. Hence the spaces $\Lambda_i$ are
generic linear spaces in $\PP^n$.
\end{proof}

To bring rational curves into the picture, we recall a rather
elementary technical fact (see  \cite[page 10]{Harris}).
\begin{lem} \label{lemmaRNC}Let $A_1,\ldots,A_m\in\PP^1$ and $Q_1,\ldots,Q_m\in
\PP^t$ be generic points. If $m\leq t+2$, then there exists a
morphism $\psi:\PP^1\longrightarrow\PP^t$ whose image is a
rational normal curve of degree $t$ and such that
$\psi(A_i)=Q_i, \ i=1,\ldots,m$.
\end{lem}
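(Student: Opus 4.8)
The plan is to reduce the lemma to the fundamental theorem of projective geometry. First I would observe that any morphism $\psi:\PP^1\to\PP^t$ whose image is a rational normal curve of degree $t$ arises from the standard Veronese parametrization by a projective change of coordinates on the target. Indeed, such a $\psi$ is given by $t+1$ homogeneous forms $f_0,\ldots,f_t$ of degree $t$ in two variables with no common zero, and the image is nondegenerate precisely when the $f_i$ are linearly independent; since $\dim_\CC H^0(\PP^1,\mathcal{O}(t))=t+1$, the $f_i$ then form a basis of the space of degree-$t$ forms. Writing $\nu_t:\PP^1\to\PP^t$, $[s:u]\mapsto[s^t:s^{t-1}u:\cdots:u^t]$, for the monomial parametrization, every such $\psi$ is therefore of the form $\psi=g\circ\nu_t$ for some $g\in\mathrm{PGL}_{t+1}$. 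Thus it suffices to find $g\in\mathrm{PGL}_{t+1}$ with $g(\nu_t(A_i))=Q_i$ for all $i$.

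Next I would set $P_i=\nu_t(A_i)$ and treat the case $m=t+2$ (the case $m<t+2$ reduces to it by adjoining generic points $A_{m+1},\ldots,A_{t+2}\in\PP^1$ and generic targets $Q_{m+1},\ldots,Q_{t+2}\in\PP^t$). The key observation is that the $P_i$ are in general position, i.e.\ every $t+1$ of them span $\PP^t$: representing $A_i=[1:a_i]$ one has $P_i=[1:a_i:\cdots:a_i^t]$, so the coordinate matrix of any $t+1$ of the $P_i$ is a Vandermonde matrix, which is invertible because the generic $A_i$ are distinct. The generic points $Q_i$ are likewise in general position. Since $\mathrm{PGL}_{t+1}$ acts simply transitively on ordered $(t+2)$-tuples of points in general position (the fundamental theorem of projective geometry: a projective frame determines a unique projectivity), there is a unique $g\in\mathrm{PGL}_{t+1}$ carrying $(P_1,\ldots,P_{t+2})$ to $(Q_1,\ldots,Q_{t+2})$. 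Then $\psi=g\circ\nu_t$ has image the rational normal curve $g(\nu_t(\PP^1))$ and satisfies $\psi(A_i)=Q_i$, as required.

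The argument is essentially formal once the two general-position statements are in place, so there is no serious obstacle; the only points requiring care are verifying that linear independence of the defining forms is exactly the nondegeneracy needed for a rational normal curve, and checking that the genericity hypotheses on the $A_i$ and $Q_i$ guarantee the frame conditions. It is worth noting that for $m=t+2$ the interpolating morphism is in fact unique, recovering a parametrized form of the uniqueness in Veronese's Theorem \ref{Veronese}, while for $m<t+2$ the freedom in choosing the auxiliary points reflects the positive-dimensional family of admissible curves.
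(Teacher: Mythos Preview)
Your argument is correct and establishes the lemma, but it proceeds along a different route from the paper. The paper reproduces the explicit construction from \cite[page~10]{Harris}: after normalizing the $Q_i$ to the standard frame of $\PP^t$, one writes down the map $P\mapsto[a_1/L_1(P):\cdots:a_{t+1}/L_{t+1}(P)]$ with $L_i$ a linear form vanishing at $A_i$ and $a_i=L_i(A_{t+2})$, and checks directly that it does the job. Your proof is instead a frame-transitivity argument: push the $A_i$ forward by the standard Veronese $\nu_t$, observe via Vandermonde that the images are a projective frame, and then invoke the simple transitivity of $\mathrm{PGL}_{t+1}$ on frames to produce $g$ with $g(\nu_t(A_i))=Q_i$. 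The paper's approach has the advantage of giving an explicit formula one can manipulate; yours is cleaner and makes the uniqueness for $m=t+2$ transparent.

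One small point: your opening sentence asserts that \emph{every} morphism $\psi$ with rnc image is of the form $g\circ\nu_t$, but as stated this is not quite right, since a priori $\psi$ could factor through a degree-$e$ self-map of $\PP^1$ (giving defining forms of degree $et$ rather than $t$). This does not affect your proof, because you only need the sufficient direction---if some $g$ works then $g\circ\nu_t$ is the desired morphism---and that is immediate. It would be cleaner simply to say ``we seek $\psi$ of the form $g\circ\nu_t$'' and drop the unnecessary classification claim.
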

\begin{proof}
It is enough to give the proof for $m=t+2$. For smaller $m$ we can
just add some extra points and repeat the same argument. After a
projectivity, we may assume that  $Q_1$
$=[1,0,\ldots,0],$ $Q_2= [0,1,0\ldots,0]$, $ \dots , Q_{t+1}=[0,\ldots,0,1]$ and $Q_{t+2}=[1,\ldots,1]$.
Then,  we choose $t+1$ linear forms
$L_1,\ldots,L_{t+1}$ on $\PP^1$ such that
$L_i(A_i)=0$, \ ($1 \leq i\leq t+1$), and we consider the map
\[\psi:\PP^1\longrightarrow \PP^t\]
defined for $P \notin \{ A_1, \dots , A_{t+1} \}$ by
\[ P\mapsto \left[{a_1 \over L_1(P)}\ ,\dots,{a_i\over
L_i(P)} \ ,\dots,{a_{t+1}\over L_{t+1}(P)}\right], \]
where $a_i=L_i(A_{t+2})$, ($\ 1 \leq i\leq t+1$), and naturally extended. It is easy to check that
$\psi$ is the required map.
\end{proof}

The previous lemma allows us to construct rational curves with
some special properties on Segre varieties.

\begin{prop}\label{SEGREcurveprop}
Notation as above, let $n_1\leq n_2\ldots\leq n_r$ be positive integers, and let
$\mathbb{X}$ be a set of $s$ generic points on the Segre variety
\[\mathbf{S}=\PP^{n_1}\times\ldots\times\PP^{n_r}.\]
\par
{\rm i)}
If  $s \leq n_1+3$ and $1 < n_1 < n_2$, or $s \leq n_2+2$ and
$n_1=1$ or $n_1=n_2$, then there exists a rational curve
$\mathbf{C}$ such that
\[\mathbb{X}\subset\mathbf{C}\subset\mathbf{S}\]
and $h_i[\mathbf{C}]=n_i$ in the Chow ring $CH(\mathbf{S})$.
\par
{\rm ii)} Let $\mathcal{C}$ be the Segre embedding of
$\mathbf{C}$, then $\deg(\mathcal{C})=n=\sum n_i$ and the linear
span of the curve is a $\PP^n$, i.e. $\mathcal{C}$ is a rational
normal curve in its linear span.
\par
{\rm iii)}
\[\mathrm{deg} (\Lambda_i\cap\phi_\mathbf{T}(\mathbf{C}))=\dim\Lambda_i+1.\] \par
{\rm iv)}
$\phi_\mathbf{T}(\mathbf{C})$ is a rational normal curve of
$\PP^n$.

\end{prop}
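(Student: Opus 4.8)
The plan is to realise $\mathbf{C}$ as the image of a single morphism $\psi=(\psi_1,\dots,\psi_r)\colon\PP^1\to\mathbf{S}$ whose $i$-th component $\psi_i\colon\PP^1\to\PP^{n_i}$ is a rational normal curve of degree $n_i$, and then to transport everything through the Segre embedding and the projection $\phi_\mathbf{T}=\pi_\Sigma\circ(\text{Segre})$ of Lemma \ref{SEGREprojectionmap}. Writing $P_j=(P_j^{(1)},\dots,P_j^{(r)})$ for the points of $\mathbb{X}$, the requirement for (i) is to produce source points $A_1,\dots,A_s\in\PP^1$ and curves $\psi_i$ with $\psi_i(A_j)=P_j^{(i)}$ and $\deg\psi_i=n_i$. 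Since at least one $\psi_i$ is injective, $\psi$ is birational onto $\mathbf{C}$ and $h_i[\mathbf{C}]=\deg\psi_i=n_i$, giving the asserted class.

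How the parameters $A_j$ are produced is exactly what separates the three numerical cases, and this is the content of (i). When $1<n_1<n_2$ the minimal factor is unique, so I would run Veronese (Theorem \ref{Veronese}) on $\PP^{n_1}$ through the $s\le n_1+3$ points $P_j^{(1)}$; this produces $\psi_1$ and, pulling the $P_j^{(1)}$ back, a configuration $A_1,\dots,A_s$. Because $n_i\ge n_1+1$ for $i\ge 2$ one has $s\le n_1+3\le n_i+2$, so Lemma \ref{lemmaRNC} fits every remaining $\psi_i$ with these fixed $A_j$. When $n_1=1$ the first factor is a $\PP^1$: taking $\psi_1$ an isomorphism lets me set $A_j:=P_j^{(1)}$ (generic, since the $P_j^{(1)}$ are), so the first factor imposes no constraint and the binding condition becomes $s\le n_2+2$, from the second factor via Lemma \ref{lemmaRNC}. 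When $n_1=n_2$ there are at least two minimal factors, so the Veronese ``$+3$'' trick is unavailable; here I would choose $A_1,\dots,A_s$ generic outright and fit all factors by Lemma \ref{lemmaRNC}, which needs $s\le n_1+2=n_2+2$.

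For (ii), the Segre embedding has hyperplane class $h_1+\dots+h_r$, so $\deg\mathcal{C}=(h_1+\dots+h_r)\cdot[\mathbf{C}]=\sum_i h_i[\mathbf{C}]=\sum_i n_i=n$ by (i). To see that $\mathcal{C}$ spans a $\PP^n$, write $\psi_i=[g_0^{(i)}:\dots:g_{n_i}^{(i)}]$, where nondegeneracy of $\psi_i$ forces $\{g_k^{(i)}\}_k$ to be a basis of $H^0(\PP^1,\mathcal{O}(n_i))$; then $\mathcal{C}$ is cut out by the degree-$n$ forms $\prod_i g_{k_i}^{(i)}$. Surjectivity of the multiplication map $\bigotimes_i H^0(\mathcal{O}(n_i))\to H^0(\mathcal{O}(n))$ on $\PP^1$ shows these products span the full $(n+1)$-dimensional space, so $\mathcal{C}$ is nondegenerate in $\PP^n$; a degree-$n$ nondegenerate rational curve in $\PP^n$ is a rational normal curve.

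Parts (iii) and (iv) I would treat together through $\phi_\mathbf{T}=\pi_\Sigma\circ(\text{Segre})$. The decisive input is that $\mathbf{C}$ avoids the base locus $\mathbf{B}$ of Lemma \ref{SEGREbirationalmap}: for generic data the finite sets $\psi_i^{-1}(H_i)\subset\PP^1$ are pairwise disjoint, so no point of $\mathbf{C}$ lies on two of the $\pi_i^{-1}(H_i)$. Granting this, $\mathcal{C}\cap\Sigma=\emptyset$ since $\Sigma\cap\mathcal{S}$ is the Segre image of $\mathbf{B}$, and I would upgrade this to $\langle\mathcal{C}\rangle\cap\Sigma=\emptyset$ by analysing the restricted system $\mathcal{L}|_{\mathbf{C}}$: pulling the $n+1$ generators of $\mathcal{L}$ back by $\psi$ yields $\prod_m g_0^{(m)}$ and $(\prod_{m'\ne m}g_0^{(m')})\,g_k^{(m)}$, and a pole argument (the poles of $g_k^{(m)}/g_0^{(m)}$ sit only on $\psi_m^{-1}(H_m)$, and these loci are disjoint) shows these forms are base-point free and linearly independent. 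Hence $\phi_\mathbf{T}$ maps $\langle\mathcal{C}\rangle\cong\PP^n$ isomorphically, carrying $\mathcal{C}$ to a rational normal curve of $\PP^n$, which is (iv). For (iii), a point of $\mathbf{C}$ with parameter $A$ lands in $\Lambda_i$ exactly when $g_0^{(i)}(A)=0$, for then $y_0$ and every block $j\ne i$ vanish while the block-$i$ coordinates survive; as $g_0^{(i)}$ has $n_i$ zeros this gives $\deg(\Lambda_i\cap\phi_\mathbf{T}(\mathbf{C}))=n_i=\dim\Lambda_i+1$. The hard part is precisely this last paragraph: ensuring $\mathbf{C}\cap\mathbf{B}=\emptyset$ and the consequent completeness of $\mathcal{L}|_{\mathbf{C}}$, i.e.\ that the projection $\pi_\Sigma$ does not degenerate $\mathcal{C}$; the rest is bookkeeping with the Chow ring and the explicit coordinates of Lemmas \ref{SEGREbirationalmap}--\ref{SEGREcurvelemma}.
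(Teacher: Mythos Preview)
Your construction in (i) and (ii) is exactly the paper's: build $\mathbf{C}$ as the image of a product map $\psi=(\psi_1,\dots,\psi_r)$ with each $\psi_i$ a degree-$n_i$ rnc, fix the source parameters $A_j$ via the first factor (Veronese, or the identity when $n_1=1$) and fit the remaining factors by Lemma~\ref{lemmaRNC}; the paper packages your three cases into the single formula $\bar s=\min\{t,n_2+2\}$, but the content is identical. The degree and span of $\mathcal{C}$ are obtained the same way, via $(h_1+\cdots+h_r)[\mathbf{C}]$ and surjectivity of the multiplication map $\bigotimes_i\mathtt{R}_{n_i}\to\mathtt{R}_n$.

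For (iii) and (iv) you diverge from the paper's main argument. The paper dispatches (iii) in one line from the Chow ring, $[\Lambda_i\cap\phi_\mathbf{T}(\mathbf{C})]=h_i[\mathbf{C}]=n_i$, and for (iv) argues by contradiction: were $\phi_\mathbf{T}(\mathbf{C})$ contained in a hyperplane $H$, genericity of $\mathbb{X}$ forces $H\neq\langle\Lambda_1,\dots,\Lambda_r\rangle$, so some $\Lambda_i\not\subset H$ and the $n_i$ points of $\Lambda_i\cap\phi_\mathbf{T}(\mathbf{C})$ would be degenerate in $\Lambda_i$; under the identification $\Lambda_i\cong H_i$ this says $H_i\cap\mathrm{Im}(\psi_i)$ fails to span $H_i$, impossible since $\mathrm{Im}(\psi_i)$ is an rnc. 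Your route instead shows directly that the $n{+}1$ forms $\prod_m g_0^{(m)}$ and $(\prod_{m'\neq m}g_0^{(m')})g_k^{(m)}$ are linearly independent on $\PP^1$, using that the zero loci $\{g_0^{(m)}=0\}$ are pairwise disjoint, and reads off (iii) from the same coordinate description. This is precisely the alternative proof the paper sketches in Remark~\ref{alternative proof}. Both are correct; the paper's main argument sidesteps the disjointness of the $\psi_i^{-1}(H_i)$ (which the paper itself flags as a ``non-obvious'' consequence of the genericity of $\mathbb{X}$), whereas your approach makes the restricted linear system fully explicit and treats (iii) and (iv) in a single computation, at the cost of having to justify that disjointness.
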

\begin{proof}

i) Let  $t  $ denote the maximum number of points such that there
exists an rnc in $\PP^{n_1}$ through $t$ generic points, that is
$t = n_1 +3 $ if $n_1>1$ (see Theorem \ref{Veronese}), and $t =
\infty$ if $n_1=1$. Let $\bar{s}  = \min \{t, n_2+2 \}$. We have
\[ \bar{s} = \min \{t, n_2+2 \} = \left\{
\begin{array}{ccc}
n_1+3 & \ {\rm for} \ 1 < n_1 < n_2 \\
n_2+2 & \ \ \ \ \ \ \ \ \ {\rm for} \ n_1 =1 \ {\rm or} \ n_1 =
n_2  \end{array} \right. ,\] and it suffices to prove the
proposition for $s=\bar{s} $.

Let $\mathbb{X}=\{P_1,\ldots,P_{\bar{s} }\}$ where
\[P_i=Q_i^{(1)}\times\ldots\times Q_i^{(r)}, \ \ ( Q_i^{(j)}\in\PP^{n_j})\]
and the points $Q_1^{(j)},\ldots,Q_{\bar{s} }^{(j)}$ are generic
in $\PP^{n_j}$.

Since $\bar{s}  \leq t$, there exists an embedding
\[\psi_1:\PP^1\hookrightarrow\PP^{n_1}\]
whose image is a rational normal curve passing through the points
$Q_1^{(1)},\ldots,Q_{\bar s}^{(1)}$. Observe that if $n_1=1$, then
we can choose  the identity map as $\psi_1$.

Let $A_i=\psi_1^{-1}(Q_i^{(1)}),\ (i=1,\ldots,{\bar s})$. Since
${\bar s} \leq n_j+2$ for any $j \geq 2$, by Lemma \ref{lemmaRNC}
we  construct embeddings
\[\psi_j:\PP^1\hookrightarrow\PP^{n_j},j=2,\ldots,r\]
such that $\psi_j(A_i)=Q_i^{(j)}$. Now we consider the curve
$\mathbf{C}$ given as the image of the product map
\[
\psi_1\times\ldots\times\psi_r:\PP^1\longrightarrow\mathbf{S}.
\]
To compute $h_i[\mathbf{C}]$ in the Chow ring of $\mathbf{S}$, we
recall that $h_i=[\pi_i^{-1}(H_i)]$ where  $H_i$ is a
hyperplane in $\PP^{n_i}$. Then $h_i[\mathbf{C}]$ is the degree
of $\pi_i(\mathbf{C})=\mbox{Im}(\psi_i)\subset\PP^{n_i}$ and this
is $n_i$.

If we let $\mathcal{C}\subset\PP^N$ be the Segre embedding of
$\mathbf{C}$, then $\deg(\mathcal{C})$ can be determined by
computing in $CH(\mathbf{S})$, i.e. we have to evaluate
\[(h_1+\ldots+h_r)[\mathbf{C}]=\sum h_i[\mathbf{C}]=\sum n_i=n. \]

ii)
To show that the linear span $\langle\mathcal{C}\rangle$ of $\mathcal{C}$ has dimension $n$,
recall that the curve $\mathcal{C}$ is the Segre embedding of $\mathbf{C}$,
and that, for each $i$,  $\psi_i(\PP^1)$  is a rational normal curve.
Now let $\mathtt{R}=\CC[x,y]$ be the coordinate ring of $\PP^1$, then $\psi_i(\PP^1)$
is the image of the linear system $\mathtt{R}_{n_i}$ of the forms of degree $n_i$,
and the  multilinear multiplication map
\[\mathtt{R}_{n_1}\times\ldots\times \mathtt{R}_{n_r}\longrightarrow \mathtt{R}_{n_1+\ldots
+n_r} =\mathtt{R}_n\]
is obviously surjective.
\par
iii)
As $\Lambda_i=\phi_\mathbf{T}(\pi_i^{-1}(H_i))$ we can work in the
Chow ring of $\mathbf{S}$ where
\[[\Lambda_i\cap\phi_\mathbf{T}(\mathbf{C})]=h_i[\mathbf{C}]=n_i=\dim\Lambda_i+1.\]
\par

iv)
To show that $\phi_\mathbf{T}(\mathbf{C})$ is an rnc it is enough
to prove that
\[ \langle\mathcal{C}\rangle  \cap\Sigma=\emptyset , \]
or, equivalently, to prove that
$\phi_\mathbf{T}(\mathbf{C})$ does not lie on a
hyperplane, i.e. it is not degenerate.

Assume that  $\phi_\mathbf{T}(\mathbf{C})$ lies on a hyperplane
$H$. Since $ \mathbb{ X }$ is a set of generic points, then $H
\neq   \langle \Lambda_1, \dots, \Lambda_r \rangle $, hence there
exists at least one $\Lambda_i$ such that the points of
\[\Lambda_i\cap\phi_\mathbf{T}(\mathbf{C})\]
are degenerate in $\Lambda_i$,
and this is the case if and only if
the points of
\[H_i\cap\mbox{Im}(\psi_i)\]
are degenerate in $H_i$, in fact, by \eqref{SEGREmapincoordinate1}, the spaces $\Lambda_i$ are
naturally identified with the hyperplanes $H_i$.
Since
$\mbox{Im}(\psi_i)$ is an rnc, we get a contradiction.
\end{proof}

\begin{rem}\label{SEGREP1remark}
Note that for  $n_1=1$ the proof of the previous proposition does
not require Veronese's  Theorem.
\end{rem}

\begin{rem}\label{alternative proof}
We give a sketch of an alternative   proof of  Proposition \ref
{SEGREcurveprop} iv).

With the notation as above, let $Q$ be a point in $\PP^{n_j}$ and
set
\[ \psi _j (Q) = (f^{(j)}_0 |_Q, \dots , f ^{(j)} _{n_j}  |_Q  )\in \PP^{n_j} ,  \]
where $f_0^{(j)}, \dots , f ^{(j)} _{n_j}   $ is a base of  $\mathtt{R}_{n_j}$.
Then by (\ref{SEGREmapincoordinateHOM}) we get that the curve $\phi_\mathbf{T}(\mathbf{C})$ is the image of the linear system:
\[ (
f _0^{(1)} \cdot\ldots\cdot  f _0^{(r)}  , \ \
f_1^{(1)}  f _0^{(2)}  \cdot\ldots\cdot f_0^{(r)},
\ldots,
f _{n_1}^{(1)}  f _0^{(2)}\cdot\ldots\cdot f _0^{(r)}, \ldots, \ \ \]
\[
f _0^{(1)}  \cdot\ldots\cdot f _0^{(r-1)}   f _{1}^{(r)} ,
\ldots,
f _0^{(1)} \cdot\ldots\cdot f _0^{(r-1)}  f _{n_r}^{(r)}  ) \ \subset \
\mathtt{R}_n . \]

If we prove that  these $n+1$ forms are linearly independent, then we are done.
\par

The key point to do this is the following:
there does not exist a point $A \in \PP^1$ such that
$f _0^{(i)}|_A =f _0^{(j)}|_A = 0$, for $i \neq j$, and this is a (non-obvious) consequence of
the genericity of  $\mathbb{X}$.

\end{rem}

Now our goal is to use the curves of the proposition above to address
the problem of feasibility.

\begin{thm}\label{SEGREteo}
Let $n_1\leq\ldots\leq n_r$ be positive  integers, and set
$n=n_1+\ldots+n_r$. Consider in $\PP^n$ a set of $s$ generic points
$P_1,\ldots,P_s$ and $r$ generic linear spaces
$\Lambda_1,\ldots,\Lambda_r$ of dimension $n_1-1,\ldots,n_r-1$,
respectively.

If $s \leq n_1+3$ and $1 < n_1 < n_2$, or $s \leq n_2+2$ and
$n_1=1$ or $n_1=n_2$, then the configuration of linear spaces
\[\Lambda_1\cup\ldots\cup\Lambda_r\cup\{P_1,\ldots,P_s\} \subset \PP^n\]
is feasible.
\end{thm}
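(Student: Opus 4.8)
The plan is to transport the curves built in Proposition \ref{SEGREcurveprop} from the Segre variety $\mathbf{S}=\PP^{n_1}\times\ldots\times\PP^{n_r}$ to $\PP^n$ through the birational map $\phi_{\mathbf{T}}$, and to read off that the resulting rnc meets the configuration maximally. The first observation I would record is that feasibility is invariant under $\mathrm{PGL}_{n+1}(\CC)$, since a projectivity carries an rnc to an rnc and preserves the degree of intersection with each linear component. Hence it is enough to prove feasibility for one convenient generic configuration of linear spaces together with generic points.

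Next I would pin down the linear spaces. A generic choice of $\Lambda_1,\ldots,\Lambda_r$ with $\dim\Lambda_i=n_i-1$ consists of pairwise skew subspaces with $\sum_i(\dim\Lambda_i+1)=\sum_i n_i=n$, so their span is a hyperplane of $\PP^n$ in which they form a complementary (direct-sum) decomposition; any two such decompositions are related by a linear automorphism. Thus every generic configuration of these spaces is projectively equivalent to the spaces $\Lambda_i=\phi_{\mathbf{T}}(\pi_i^{-1}(H_i))$ arising from the Segre construction, which by Lemma \ref{SEGREcurvelemma} are themselves generic of the correct dimensions. By the invariance above I may therefore assume that the $\Lambda_i$ in the statement are exactly these Segre spaces.

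Now I would produce the curve. Let $P_1,\ldots,P_s\in\PP^n$ be generic points, chosen generically with respect to the now-fixed $\Lambda_i$. Since $\phi_{\mathbf{T}}$ is birational by Lemma \ref{SEGREbirationalmap}, so is the $s$-fold map $\phi_{\mathbf{T}}^{\times s}$, and therefore the preimages $X_j=\phi_{\mathbf{T}}^{-1}(P_j)$ form a set $\mathbb{X}=\{X_1,\ldots,X_s\}$ of $s$ generic points of $\mathbf{S}$, all lying off the indeterminacy locus $\mathbf{B}$. The numerical hypothesis on $s,n_1,n_2$ is precisely the one demanded by Proposition \ref{SEGREcurveprop}, which then yields a rational curve $\mathbf{C}\subset\mathbf{S}$ with $\mathbb{X}\subset\mathbf{C}$ such that $\mathcal{C}:=\phi_{\mathbf{T}}(\mathbf{C})$ is an rnc of $\PP^n$ (part iv) and $\deg(\Lambda_i\cap\mathcal{C})=\dim\Lambda_i+1$ for every $i$ (part iii). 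Because each $X_j\in\mathbf{C}\setminus\mathbf{B}$, we also get $P_j=\phi_{\mathbf{T}}(X_j)\in\mathcal{C}$, i.e. $\mathcal{C}$ passes through every $P_j$ (the maximal intersection condition for a $0$-dimensional component). Hence $\mathcal{C}$ maximally meets every component of $\Lambda_1\cup\ldots\cup\Lambda_r\cup\{P_1,\ldots,P_s\}$, and the configuration is feasible.

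The genuine content is already inside Proposition \ref{SEGREcurveprop}; the theorem is its repackaging through $\phi_{\mathbf{T}}$. Accordingly, the delicate point is the genericity bookkeeping: one must be certain that (a) the special spaces coming from the Segre construction truly represent the generic configuration of linear spaces---this is the role of the projective-equivalence remark together with Lemma \ref{SEGREcurvelemma}---and that (b) generic points of $\PP^n$ pull back under $\phi_{\mathbf{T}}$ to generic points of $\mathbf{S}$, so that Proposition \ref{SEGREcurveprop} applies verbatim. I expect (a), namely checking that fixing the $\Lambda_i$ to the Segre spaces does not secretly constrain the points $P_j$ relative to them, to be the main subtlety; everything else is either routine verification or a direct citation of the preceding results.
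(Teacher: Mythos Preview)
Your proof is correct and follows essentially the same approach as the paper: both transport the curve of Proposition \ref{SEGREcurveprop} from $\mathbf{S}$ to $\PP^n$ via $\phi_{\mathbf{T}}$, noting that generic points of $\PP^n$ correspond to generic points of $\mathbf{S}$ and that the contracted loci are (up to projectivity) the given generic linear spaces. Your version is slightly more explicit about the projective-equivalence and genericity bookkeeping that the paper leaves implicit, but the argument is the same.
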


\begin{proof}
Consider the Segre variety $\PP^{n_1}\times \dots \times
\PP^{n_r}$ and choose hyperplanes $H_i \subset \PP^{n_i}$ such
that the birational map
\[\phi_\mathbf{T} : \PP^{n_1}\times \dots \times \PP^{n_r}\dashrightarrow \PP^n , \]
where $\mathbf{T}=H_1\times\ldots\times H_r$,
contracts
\[H_1\times\PP^{n_2}\times \dots \times \PP^{n_r},\]
\[ \PP^{n_1} \times H_2\times\PP^{n_3}\times \dots \times \PP^{n_r}, \]
\[ \vdots  \]
\[ \PP^{n_1} \times \dots \times\PP^{n_{r-1}}\times H_r\]
onto the linear spaces
$\Lambda_1,\ldots,\Lambda_r$ ,  respectively.

Chosen generic points  $P_1,\ldots ,P_s\in\PP^n$, via the map
$\phi_\mathbf{T}$ these points correspond to $s$ generic points on
the Segre variety. Then, the image of the curve of Proposition
\ref{SEGREcurveprop} is an rnc in $\PP^n$ passing through the
points $P_i$, and $n_i-1$-secant to $\Lambda_i$. Hence, the
generic configuration of linear spaces
\[\Lambda_1\cup\ldots\cup\Lambda_r\cup\{P_1,\ldots,P_s\} \subset \PP^n\]
is feasible.
\end{proof}

\begin{ex}\label{P2P3ex}
We illustrate our strategy explicitly by studying the weight
vector $L=(5,1,1,0)$ in $\PP^5$.
Let $P_1,\ldots ,P_5\in\PP^5$ be generic points, and $\Lambda_1 , \Lambda_2 \subset \PP^5$
be generic linear spaces, of dimension $1$ and $2$,  respectively.
Consider the Segre variety
$\PP^2\times\PP^3$ and let  $\mathbf{T} \subset \PP^2\times\PP^3$,
$\mathbf{T}=H_1\times
H_2\simeq\PP^1\times\PP^2$ be such that the  birational map
\[\phi_\mathbf{T}:\PP^2\times\PP^3\dashrightarrow \PP^5\]
contracts
$H_1\times\PP^3$, and $\PP^2\times H_2$, onto the line
$\Lambda_1$, and the plane $\Lambda_2$,  respectively.
Via the map $\phi_\mathbf{T}$, the points $P_i$ corresponds to $5$ generic points
in $\PP^2\times\PP^3$. So
the image, via the map
$\phi_\mathbf{T}$, of the curve in Proposition \ref{SEGREcurveprop}
is an rnc in $\PP^5$ passing through the points $P_i$, 2-secant to
$\Lambda_1$ and 3-secant to $\Lambda_2$. Hence $L$
is feasible.
\end{ex}

\begin{rem}\label{SEGREVERONESErem}
The previous example is related to the study of the variety of
$5$-secant $\PP^4$'s to the Segre-Veronese variety
$\PP^2\times\PP^3$ embedded using forms of bidegree $(1,2)$. As
noticed in \cite[Subsection 6.2]{CaCa}, the  existence of the rnc
yields  that this Segre-Veronese variety is $5$-defective.
\end{rem}

\begin{rem}\label{VERONESEremark}
We notice that Theorem \ref{SEGREteo} and Remark
\ref{SEGREP1remark}  allow us to recover Veronese's Theorem in
$\PP^n$. It is enough to consider $r=n$, $n_1=\ldots=n_r=1$, and
$s=3$ to show that the weight vector $(n+3,0,\ldots,0)$ in $\PP^n$
is feasible.
\end{rem}

\begin{rem}
Another proof of the feasibility of $(n+2,0,\ldots,0,1)$ in
$\PP^n$ can be deduced by Theorem \ref{SEGREteo} with $r=2$,
$n_1=1$, $n_2=n-1$, and $s=n_2+2=n+1$ generic points  (see also
Theorem \ref{NOSTROcod2}).
\end{rem}

\section{Methods for non-feasibility}\label{SECnonfeasible}

\subsection{Parameter space}\label{paramterSUBSEC}
We can use a natural parameter space for rncs to get a powerful
non-feasibility condition. Given a weight vector $L$ in $\PP^n$ we
consider a parameter space for the configurations of linear spaces
$\Lambda_L$, namely
\[\mathcal{D}=\left(G(\PP^0,\PP^n)\right)^{l_0}\times\ldots\times\left(G(\PP^i,\PP^n)\right)^{l_i}\times\ldots\times\left(G(\PP^{n-2},\PP^n)\right)^{l_{n-2}},\]
where  $G(\PP^i,\PP^n)$ is the  Grassmannian of the $i$-dimensional linear subspaces of $\PP^n$.
We also recall that there exists an irreducible component of the
Hilbert scheme, namely
\[\mathcal{H}\subset\mathrm{Hilb }_{nt+1}(\PP^n),\]
which is a parameter spaces for rncs in $\PP^n$. Finally, we consider the
incidence correspondence
\[\mathcal{H}\times\mathcal{D}\supset\Sigma=\left\lbrace (\mathcal{C},\Lambda_L) : \mathcal{C}\mbox{ maximally intersects }\Lambda_L
\right\rbrace\] and the natural projection map
$\psi:\Sigma\longrightarrow\mathcal{D}$. Clearly we have that
$L$ is feasible if and only if $\psi$ is a dominant map.

\begin{prop}\label{numericalNOTEXIST}
Let $L=(l_0,\ldots,l_{n-2})$ be a weight vector in $\PP^n$. If
\[\sum_{i=0}^{n-2} (i+1)(n-1-i)l_i> (n+3)(n-1),\]
then $L$ is non-feasible.
\end{prop}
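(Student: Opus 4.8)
The plan is a dimension count on the incidence correspondence $\Sigma$: under the stated inequality I will show that the projection $\psi:\Sigma\to\mathcal{D}$ cannot be dominant, which by the discussion preceding the statement is exactly the failure of feasibility. First I would record the two ambient dimensions. Since $\mathcal{H}$ is a parameter space for rncs in $\PP^n$ — equivalently the orbit $PGL(n+1)/PGL(2)$ — we have $\dim\mathcal{H}=(n+1)^2-1-3=(n+3)(n-1)$, the number already quoted in the introduction. On the configuration side, $\dim G(\PP^i,\PP^n)=(i+1)(n-i)$, so
\[\dim\mathcal{D}=\sum_{i=0}^{n-2}l_i(i+1)(n-i).\]

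Next I would compute $\dim\Sigma$ by projecting to $\mathcal{H}$ rather than to $\mathcal{D}$. This projection is dominant, since every rnc admits configurations maximally intersecting it (choose secant spaces), so it suffices to determine the dimension of a general fiber: for a fixed rnc $\mathcal{C}$ the dimension of the family of configurations $\Lambda_L$ maximally met by $\mathcal{C}$. This fiber is a product, over the components, of the families of $(i+1)$-secant $i$-planes to $\mathcal{C}$, so the key local computation is that \emph{for fixed $\mathcal{C}$, the $(i+1)$-secant $i$-planes form a family of dimension exactly $i+1$}. The point is that any $i+1$ points on $\mathcal{C}\cong\PP^1$ are in linearly general position, hence span a unique $i$-plane that is $(i+1)$-secant; conversely such a plane recovers its $i+1$ intersection points. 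Thus the family is the image of the $(i+1)$-fold symmetric product of $\PP^1$ under a generically finite map, of dimension $i+1$. Since all rncs in $\PP^n$ are projectively equivalent, this fiber dimension is constant along $\mathcal{H}$, giving
\[\dim\Sigma=(n+3)(n-1)+\sum_{i=0}^{n-2}(i+1)l_i.\]

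Finally I would subtract the two quantities:
\[\dim\mathcal{D}-\dim\Sigma=\sum_{i=0}^{n-2}l_i(i+1)(n-i)-(n+3)(n-1)-\sum_{i=0}^{n-2}(i+1)l_i=\sum_{i=0}^{n-2}(i+1)(n-1-i)l_i-(n+3)(n-1).\]
Under the hypothesis the right-hand side is strictly positive, so $\dim\Sigma<\dim\mathcal{D}$. Then $\dim\overline{\psi(\Sigma)}\le\dim\Sigma<\dim\mathcal{D}$, so $\psi$ is not dominant and $L$ is non-feasible.

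The main obstacle is the fiber computation, specifically the upper bound $\dim\{(i+1)\text{-secant }i\text{-planes}\}\le i+1$: one must argue that a plane meeting $\mathcal{C}$ in a length-$(i+1)$ scheme is forced to be spanned by that scheme (using that subschemes of length $\le n+1$ on an rnc impose independent conditions), so the secant family carries no moduli beyond the choice of the $i+1$ intersection points. For the non-feasibility conclusion only this upper bound on the fiber dimension is needed, which keeps the estimate robust even where the intersection scheme degenerates.
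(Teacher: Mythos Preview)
Your argument is correct and follows essentially the same route as the paper: compute $\dim\Sigma$ via the projection to $\mathcal{H}$, observe the fibers have dimension $\sum(i+1)l_i$, and compare with $\dim\mathcal{D}$ to see that $\psi$ cannot be dominant. You spell out the fiber computation (the $(i+1)$-secant $i$-planes forming an $(i+1)$-dimensional family) more carefully than the paper, which simply asserts the fiber dimension.
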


\begin{proof} With the notation as above, we compute
$\dim\Sigma$. In order to do this, we use the projection
$\Sigma\longrightarrow\mathcal{H}$ whose fibers  have
dimension
\[l_0+2l_1+\ldots +(i+1)l_i+\ldots +(n-1)l_{n-2}=\sum_{i=0}^{n-2}(i+1)l_i.\]
As $\dim\mathcal{H}=(n+3)(n-1)$, we get
\[\dim\Sigma=(n+3)(n-1)+\sum_{i=0}^{n-2}(i+1)l_i.\]
Now we compute $\dim \Sigma-\dim \mathcal{D}$. We have:

\[\dim \Sigma-\dim \mathcal{D}=(n+3)(n-1)+\sum_{i=0}^{n-2}(i+1)l_i-\sum_{i=0}^{n-2}(i+1)(n-i)l_i\]
\[=(n+3)(n-1)-\sum_{i=0}^{n-2}(i+1)(n-1-i)l_i <0\]
Thus $\psi$ is not a dominant map, and  $L$ is non-feasible.
\end{proof}

A priori, there are infinitely many weight vectors that one should
study for feasibility in each $\PP^n$,  but from the  previous
proposition we immediately get the following

\begin{cor}\label{finitelyCOR}
There are only finitely many feasible weight vectors in $\PP^n$.
\end{cor}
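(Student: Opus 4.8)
The plan is to deduce finiteness directly from the non-feasibility criterion of Proposition \ref{numericalNOTEXIST}, with no further geometric input. By contraposition, that proposition tells us that any \emph{feasible} weight vector $L=(l_0,\ldots,l_{n-2})$ in $\PP^n$ must satisfy the reverse inequality
\[
\sum_{i=0}^{n-2}(i+1)(n-1-i)\,l_i\;\leq\;(n+3)(n-1).
\]
Thus the set of feasible vectors is contained in the set of non-negative integer solutions of this single linear inequality, and the whole task reduces to showing that inequality has only finitely many such solutions.

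The key observation is that every coefficient on the left-hand side is a strictly positive integer. Indeed, for each index $0\leq i\leq n-2$ we have $i+1\geq 1$ and $n-1-i\geq 1$, so $(i+1)(n-1-i)\geq 1$. Since all the $l_i$ are non-negative, every summand is non-negative, and in particular each individual term is bounded by the entire sum. Hence for each $i$,
\[
l_i\;\leq\;(i+1)(n-1-i)\,l_i\;\leq\;(n+3)(n-1).
\]
It follows that each coordinate $l_i$ is confined to the finite set $\{0,1,\ldots,(n+3)(n-1)\}$, so all feasible weight vectors lie in a finite box in $\ZZ^{n-1}$; there are at most $\big((n+3)(n-1)+1\big)^{\,n-1}$ of them, proving the corollary.

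There is no genuine obstacle here, as the argument is essentially immediate once Proposition \ref{numericalNOTEXIST} is in hand. The only point that deserves attention is the positivity of every coefficient $(i+1)(n-1-i)$ over the full range $0\leq i\leq n-2$ of indices occurring in a weight vector in $\PP^n$: this is precisely the fact that converts a \emph{single} linear inequality into a \emph{simultaneous} upper bound on all of the coordinates at once, which is what forces finiteness. (Were any coefficient allowed to vanish, the corresponding $l_i$ could grow without bound.) I would therefore emphasize that step explicitly and otherwise keep the proof as short as indicated above.
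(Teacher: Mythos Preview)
Your argument is correct and is exactly the approach the paper intends: the corollary is stated as an immediate consequence of Proposition \ref{numericalNOTEXIST}, and you have simply spelled out why the single linear inequality with strictly positive coefficients bounds all the $l_i$. There is nothing to add.
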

\hfill \qed

\begin{rem} The numerical conditions of the Proposition above is
sufficient, but it is not necessary for feasibility. For instance  consider the weight
vector
$L=(4,2)$ in $ \PP^3$.
By Theorem \ref{NOSTROcod2} we know that $L$ is non-feasible, see also
\cite{CaCa}. However, one has
\[\sum_{i=0}^{n-2} (i+1)(n-1-i)l_i=12=(n+3)(n-1)\]
and non-feasibility can not be obtained by Proposition
\ref{numericalNOTEXIST}.
\end{rem}

\subsection{Bezout argument}

A simple elaboration of a Bezout type argument provides a useful
tool to prove non-feasibility in many interesting situations, for
example when Proposition \ref{numericalNOTEXIST} cannot be
applied.

The starting point is a consequence of the Riemann-Roch Theorem
for rational curves. Namely, given  points $P_i $ with
multiplicity $m_i$, and a degree $n$ very ample divisor $H$ on
$\PP^1$ one has
\[h^0(\mathcal{O}_{\PP^1}(dH-\sum m_i P_i))=nd+1-\sum m_i .\]
Hence, a degree $d$ hypersurface meeting a degree $n$ rnc in more
than $nd$ points, counted with multiplicity, must contain the
curve.

\begin{prop}\label{bezoutPROP} Let   $\Lambda'\subset\PP^n$ be a generic configuration  of linear
spaces, and let $\Gamma \subset\PP^n$ be a generic $k$-dimensional linear space.
Construct the configuration $\Lambda=\Lambda'\cup
\Gamma$ and let $L=(l_0,\ldots,l_{n-2})$ be its weight vector.
If,
for an integer d, the following Hilbert function equality holds
\[H(\Lambda,d)=H(\Lambda',d)+{d+k\choose k},\]
and moreover
\[\sum_{i=0}^{n-2}l_i(i+1)-1>dn,\]
then $L$ is non-feasible.
\end{prop}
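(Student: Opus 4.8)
The plan is to argue by contradiction, deriving a forbidden intersection number for a rnc against a well-chosen hypersurface. Suppose $L$ is feasible, so that for the generic configuration $\Lambda = \Lambda' \cup \Gamma$ there exists a rnc $\mathcal{C}$ maximally intersecting every component. In particular $\mathcal{C}$ meets each $i$-dimensional component in $i+1$ points, so the total number of intersection points of $\mathcal{C}$ with $\Lambda$, counted over all components, is $\sum_{i=0}^{n-2} l_i(i+1)$. The key move is to produce a degree $d$ hypersurface $F$ that contains $\Lambda$ (or at least contains enough of it to force a Bezout violation) but does \emph{not} contain $\mathcal{C}$, and then compare the number of forced intersection points against the Bezout bound $dn$ supplied by the Riemann–Roch remark preceding the statement.

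The first step is to interpret the Hilbert function hypothesis. The equality
\[
H(\Lambda,d)=H(\Lambda',d)+\binom{d+k}{k}
\]
says that imposing the extra $k$-plane $\Gamma$ on degree $d$ forms costs the full $\binom{d+k}{k}$ conditions, i.e.\ that $\Gamma$ imposes independent conditions ``on top of'' $\Lambda'$. Equivalently, the restriction map from degree $d$ forms vanishing on $\Lambda'$ to degree $d$ forms on $\Gamma \cong \PP^k$ is surjective. The point of this is that one can find a degree $d$ form $F$ vanishing on all of $\Lambda'$ whose restriction to $\Gamma$ is any prescribed degree $d$ form on $\Gamma$; in particular one can choose $F$ vanishing on $\Lambda'$ but with $F|_\Gamma \ne 0$, so that $F$ does not contain $\Gamma$ and hence does not contain the whole curve $\mathcal{C}$ (since $\mathcal{C}$ is not contained in any component).

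Next I would count intersection points. The hypersurface $F$ contains $\Lambda'$, hence it contains every point of $\mathcal{C} \cap \Lambda'$, which accounts for $\sum_{i} l_i(i+1)$ points coming from the components of $\Lambda'$ together with the $k+1$ points of $\mathcal{C}\cap\Gamma$ — so the total count of points of $\Lambda$ lying on both $\mathcal{C}$ and $F$ is at least $\sum_{i=0}^{n-2} l_i(i+1)$, where here $\Gamma$ is itself one of the components counted in the weight vector $L$. I must be slightly careful about whether $F$ picks up the $\Gamma$-points: even if $F|_\Gamma \ne 0$, the points $\mathcal{C}\cap\Gamma$ need not lie on $F$, so the safe count uses only the $\Lambda'$ points. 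This is exactly why the hypothesis is phrased as $\sum l_i(i+1) - 1 > dn$: the $-1$ absorbs the possible loss of one of the $\Gamma$-points (or reflects counting $\dim\Gamma+1 = k+1$ points on $\Gamma$ minus the slack), leaving $\mathcal{C}\cap F \supseteq (\text{points of }\Lambda'\cap\mathcal{C}) \cup (\text{all but possibly one point of }\Gamma\cap\mathcal{C})$, for a total of at least $\sum_{i=0}^{n-2} l_i(i+1) - 1$ distinct points. Since this strictly exceeds $dn = \deg F \cdot \deg \mathcal{C}$, the degree $d$ form $F$ meets the degree $n$ rnc $\mathcal{C}$ in more than $dn$ points; by the Riemann–Roch consequence recalled just before the proposition, $F$ must then contain $\mathcal{C}$. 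This contradicts $F|_\Gamma \ne 0$ together with $\Gamma \subset \mathcal{C}$'s secancy, giving non-feasibility.

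\textbf{Main obstacle.} The delicate point is the bookkeeping around the single point encoded by the ``$-1$'': I must pin down precisely which intersection points of $\mathcal{C}$ with $\Lambda$ are \emph{guaranteed} to lie on $F$, given only that $F \supseteq \Lambda'$ and $F \not\supseteq \Gamma$. The cleanest route is to take $F$ vanishing on $\Lambda'$ and on all but one of the $k+1$ points $\mathcal{C}\cap\Gamma$ — the Hilbert function surjectivity lets me prescribe $F|_\Gamma$ to be a degree $d$ form on $\PP^k$ vanishing at any $k+1$ chosen points that impose independent conditions, hence vanishing at a chosen $k$ of the $k+1$ secant points while not vanishing identically, which is possible precisely because $d \ge 1$ makes $\binom{d+k}{k} > $ (number of points one needs to miss). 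Verifying that this choice is compatible with genericity of $\Lambda$ and that the resulting point count is $\sum l_i(i+1) - 1$, strictly exceeding $dn$, is the crux; everything else is the standard Bezout-versus-Riemann–Roch contradiction.
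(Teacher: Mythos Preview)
Your argument is essentially the paper's own proof: assume feasibility, take the rnc $\mathcal{C}$, use the Hilbert function hypothesis (equivalently, surjectivity of restriction $(I_{\Lambda'})_d \to H^0(\mathcal{O}_\Gamma(d))$) together with the fact that the $k+1$ points $\mathcal{C}\cap\Gamma$ are in linear general position in $\Gamma$ to produce $F\in (I_{\Lambda'})_d$ vanishing at $k$ of the secant points but not the last one; count to get $\deg(\{F=0\}\cap\mathcal{C})\ge \sum l_i(i+1)-1>dn$, force $\mathcal{C}\subset\{F=0\}$, and contradict $F(P_{k+1})\ne 0$.

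One small imprecision to fix: in your first pass you write that the contradiction is ``$F|_\Gamma\ne 0$ together with $\Gamma\subset\mathcal{C}$'s secancy''. Merely $F|_\Gamma\ne 0$ is not enough---a nonzero degree $d$ form on $\Gamma$ vanishing at $P_1,\dots,P_k$ could also vanish at $P_{k+1}$. The actual contradiction (which you do gesture at in your ``Main obstacle'' paragraph) is that the $k+1$ points, being in linear general position in $\Gamma\cong\PP^k$, impose \emph{independent} conditions on $(I_{\Lambda'})_d$; hence one may choose $F$ with $F(P_{k+1})\ne 0$, and it is this specific nonvanishing that is contradicted by $\mathcal{C}\subset\{F=0\}$. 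The paper makes this step explicit by first arguing that $P_1,\dots,P_{k+1}$ (augmented by $\binom{d+k}{k}-(k+1)$ further generic points of $\Gamma$) impose independent conditions on $(I_{\Lambda'})_d$, so that $(I_{\Lambda'\cup P_1\cup\dots\cup P_k})_d\supsetneq (I_{\Lambda'\cup P_1\cup\dots\cup P_{k+1}})_d$; your Bezout argument then shows these two spaces coincide, which is the contradiction.
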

\begin{proof}
Assume  that $\Lambda$ is feasible and let
$\mathcal{C}$ be a required rnc. Consider the degree $k+1$
scheme $\mathcal{C}\cap\Gamma$ which, by the genericity of
$\Lambda$, is reduced and can be written as
\[\mathcal{C}\cap\Gamma=\{P_1,\ldots,P_{k+1}\}.\]
Since the $k+1$ points $P_i$ are in  linear general position,
if we choose
$t={d+k\choose k}-(k+1)$ generic points $Q_i\in\Gamma$, then any  hypersurface of degree $d$
passing through the ${d+k\choose k}$ points $P_1, \dots,P_{k+1},Q_1,\dots,Q_t$
has $\Gamma$ as fixed component.
Hence
\[\dim (I_{\Lambda'\cup P_1\cup\ldots\cup P_{k}\cup Q_1\cup\ldots\cup Q_t})_d=
\dim (I_{\Lambda'\cup\Gamma})_d=
\dim (I_{\Lambda})_d=\dim ( I_{\Lambda'})_d-{d+k\choose k}.\]
It follows that  the $k+1$ points $P_i$ impose independent conditions to the forms of
$(I_{\Lambda'})_d$.
Now choose $k$ of them, say $P_1, \dots, P_k$ ,  let $F\in (I_{\Lambda'\cup P_1\cup\ldots\cup P_{k}})_d$ and
notice that
\[\deg(\{F=0\}\cap\mathcal{C})=\sum_{i=0}^{n-2}l_i(i+1)-(k+1)+k>dn ,\]
hence $\{F=0\}\supset\mathcal{C}$.
Since $P_{k+1}\in\mathcal{C}$, we get
\[ (I_{\Lambda'\cup P_1\cup\ldots\cup P_{k}})_d= (I_{\Lambda'\cup P_1\cup\ldots\cup P_{k}\cup P_{k+1}})_d\]
and this is a contradiction.
\end{proof}

\begin{ex} Proposition \ref{bezoutPROP} provides the non-feasibility  in  $\PP^n$ of the weight vector
$L=(4,0,\ldots,0,2)$ using $k=0$, that is $\Gamma $ is a point, and $d=2$ (see also
Theorem \ref{NOSTROcod2}).
\end{ex}

\subsection{Projection argument}
We present a simple argument which allows us to somehow improve
Theorem \ref{SEGREteo} into a necessary and sufficient statement.

\begin{lem} \label{PROJlemma} Let $\Lambda\subset\PP^n$ be a configuration of
linear spaces. Let $\Lambda_i$ be an $i$-dimensional component of
$\Lambda$, consider the projection $\pi$ from $\Lambda_i$
\[
\pi:\PP^n\longrightarrow\PP^{n-i-1}
\]
and let $\Lambda'=\pi(\Lambda)$ be the projected configuration. If
$\Lambda'$ is non-feasible, then $\Lambda$ is non-feasible also.
\end{lem}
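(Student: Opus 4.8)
The plan is to prove the contrapositive: I will show that if $\Lambda$ is feasible then so is $\Lambda'=\pi(\Lambda)$. So let $\mathcal{C}\subset\PP^n$ be a rnc maximally intersecting $\Lambda$; the candidate curve witnessing feasibility of $\Lambda'$ will be the projected curve $\pi(\mathcal{C})\subset\PP^{n-i-1}$, and the whole argument reduces to showing (a) that $\pi(\mathcal{C})$ is again a rnc, and (b) that it maximally meets every projected component.

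The first thing I would record is that the center of projection is cut out on $\mathcal{C}$ by the intersection scheme itself. Since $\mathcal{C}$ maximally meets $\Lambda_i$ we have $\deg(\mathcal{C}\cap\Lambda_i)=i+1$, and because any length-$(i+1)$ subscheme of a rnc spans exactly a $\PP^i$, the span $\langle\mathcal{C}\cap\Lambda_i\rangle$ is an $i$-plane contained in the $i$-plane $\Lambda_i$, hence equal to it. Parametrizing $\mathcal{C}$ by the complete linear system $|\mathcal{O}_{\PP^1}(n)|$, the hyperplanes of $\PP^{n-i-1}$ pull back to the hyperplanes of $\PP^n$ through $\Lambda_i$, which cut on $\mathcal{C}$ precisely the degree-$n$ forms vanishing on the degree-$(i+1)$ divisor $\mathcal{C}\cap\Lambda_i$; after removing this base locus one is left with the complete, very ample system $|\mathcal{O}_{\PP^1}(n-i-1)|$. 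Thus $\pi$ restricts to an embedding of $\mathcal{C}$ onto a nondegenerate curve of degree $n-i-1$ in $\PP^{n-i-1}$, that is, $\pi(\mathcal{C})$ is a rnc.

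For step (b), fix a component $\Lambda_j\neq\Lambda_i$ and set $\Lambda'_j=\pi(\Lambda_j)$. The upper bound $\deg(\pi(\mathcal{C})\cap\Lambda'_j)\le\dim\Lambda'_j+1$ is automatic, since a linear space can meet a rnc in at most one more point than its dimension. For the reverse bound I would transport the intersection scheme: for a generic configuration $\Lambda_i\cap\Lambda_j=\emptyset$, so $\pi$ restricts to an isomorphism on $\Lambda_j$, whence $\dim\Lambda'_j=\dim\Lambda_j$ and the length-$(\dim\Lambda_j+1)$ scheme $\mathcal{C}\cap\Lambda_j$ maps isomorphically onto a subscheme of $\pi(\mathcal{C})\cap\Lambda'_j$ of the same length. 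Comparing with the upper bound forces equality, so $\pi(\mathcal{C})$ maximally intersects each $\Lambda'_j$, and hence all of $\Lambda'$.

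I expect the heart of the matter to be step (a): everything depends on the degree dropping by exactly $i+1$ and on the image being nondegenerate, and both rest on the identification $\Lambda_i=\langle\mathcal{C}\cap\Lambda_i\rangle$ together with the linear-general-position property of rational normal curves (which must be invoked scheme-theoretically, since the intersection need not be reduced). A secondary, more bookkeeping obstacle is genericity: one has to ensure that projection from $\Lambda_i$ preserves the dimensions of the remaining components and the lengths of their intersections with $\mathcal{C}$, which is exactly where the genericity of the configuration enters.
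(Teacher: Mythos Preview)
Your argument is correct and follows exactly the same route as the paper: prove the contrapositive by projecting the rnc $\mathcal{C}$ and checking that $\pi(\mathcal{C})$ is again a rnc maximally meeting $\Lambda'$. The paper's proof is a one-liner that simply asserts this; you have supplied the details (the identification $\Lambda_i=\langle\mathcal{C}\cap\Lambda_i\rangle$, the linear-system computation showing $\pi(\mathcal{C})$ is a rnc of degree $n-i-1$, and the transport of intersection schemes). Your reservation about genericity is well placed: the lemma as stated does not assume it, and indeed the paper handles this by a separate remark immediately afterward, noting that genericity of $\Lambda$ implies genericity of $\Lambda'$, so that the conclusion can be read at the level of weight vectors.
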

\begin{proof} Assume $\Lambda$ is feasible and let $\mathcal{C}$ be an
rnc with the required properties. Then, $\pi(\mathcal{C})$ is a
rational normal curve of $\PP^{n-i}$ maximally intersecting
$\Lambda'$, a contradiction.
\end{proof}

\begin{rem} If the notation is as in the lemma above, notice that if
$\Lambda$ is generic then $\Lambda'$ is also. In particular, the
conclusion can be read in term of weight vectors.
\end{rem}

\begin{thm}\label{SEGREteoiff}
Let $n_1\leq\ldots\leq n_r$ be positive  integers, and set
$n=n_1+\ldots+n_r$. Consider in $\PP^n$ a set of $s$ generic points
$P_1,\ldots,P_s$ and $r$ generic linear spaces
$\Lambda_1,\ldots,\Lambda_r$ of dimension $n_1-1,\ldots,n_r-1$,
respectively. \par
The
configuration of linear spaces
\[\Lambda=\Lambda_1\cup\ldots\cup\Lambda_r\cup\{P_1,\ldots,P_s\}\]
is feasible if and only if
 $s \leq n_1+3$ and $1 < n_1 < n_2$, or $s \leq n_2+2$ and
$n_1=1$ or $n_1=n_2$.
\end{thm}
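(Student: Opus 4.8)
The forward implication is precisely Theorem \ref{SEGREteo}, so the plan is to establish the converse: whenever $s$ exceeds the stated bound, the configuration $\Lambda$ is non-feasible. I would split the argument according to the two regimes of the statement, since they require genuinely different tools, and in each I would use Lemma \ref{PROJlemma} to reduce to a smaller configuration detected sharply by an available criterion.

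First consider the regime $1<n_1<n_2$, where I must show non-feasibility for $s\geq n_1+4$. The idea is to project away every space except $\Lambda_1$. Projecting from the join $\langle\Lambda_2,\ldots,\Lambda_r\rangle$, a generic linear space of dimension $(n-n_1)-1$, lands us in $\PP^{n_1}$; applying Lemma \ref{PROJlemma} iteratively to the components $\Lambda_r,\Lambda_{r-1},\ldots,\Lambda_2$ (and using that a generic configuration projects to a generic one), it suffices to prove the projected configuration non-feasible. That configuration consists of the $s$ generic images of the points together with the image of $\Lambda_1$, which is a hyperplane of $\PP^{n_1}$. Since every rnc of degree $n_1$ meets a hyperplane of $\PP^{n_1}$ in a scheme of degree $n_1=\dim+1$, the hyperplane imposes no condition, and feasibility reduces to the existence of an rnc through $s$ generic points of $\PP^{n_1}$. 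By Theorem \ref{Veronese} this fails as soon as $s\geq n_1+4$, giving the claim.

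Next consider the regime $n_1=1$ or $n_1=n_2$, where I must treat $s\geq n_2+3$. Here I would project only from $\langle\Lambda_3,\ldots,\Lambda_r\rangle$ (the identity if $r=2$), landing in $\PP^{M}$ with $M=n_1+n_2$ and retaining $\Lambda_1$, $\Lambda_2$ and the $s$ points; again Lemma \ref{PROJlemma} reduces matters to this projected configuration. To it I apply Proposition \ref{numericalNOTEXIST}: the two spaces contribute $n_1(M-n_1)+n_2(M-n_2)=2n_1n_2$ and the points contribute $s(M-1)$, so non-feasibility holds as soon as
\[
s(M-1)+2n_1n_2>(M+3)(M-1),
\quad\text{i.e.}\quad
s>(n_1+n_2+3)-\frac{2n_1n_2}{n_1+n_2-1}.
\]
A short computation shows the right-hand threshold equals exactly $n_2+2$ when $n_1=1$, and lies in $[\,n_2+2,\,n_2+3)$ when $n_1=n_2$; in either case the inequality is equivalent to $s\geq n_2+3$, which is the desired range.

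The main obstacle, and the reason two different projection depths are needed, is sharpness. Projecting all the way to $\PP^{n_1}$ is indispensable in the first regime because applying Proposition \ref{numericalNOTEXIST} directly in $\PP^n$ (or even in $\PP^{n_1+n_2}$) is not sharp once $n_2$ is large relative to $n_1$: it fails to detect non-feasibility at $s=n_1+4$, whereas the Veronese bound is exact. Conversely, in the second regime one must retain both $\Lambda_1$ and $\Lambda_2$ while discarding the remaining large spaces, since those spaces contribute spurious terms that would otherwise render the numerical estimate far too weak. Thus the delicate point is to select, in each regime, the unique projection that simultaneously kills the irrelevant spaces and leaves a configuration whose non-feasibility is detected exactly by the chosen criterion, together with the routine verification that genericity is preserved at every projection step.
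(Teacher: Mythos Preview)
Your argument is correct. For the regime $1<n_1<n_2$ and for $n_1=1$ it coincides with the paper's proof: project away all but one (respectively two) of the $\Lambda_i$ via Lemma \ref{PROJlemma} and then invoke Veronese's bound (respectively Proposition \ref{numericalNOTEXIST}).

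Where you genuinely diverge is the case $n_1=n_2=m$. The paper does \emph{not} use Proposition \ref{numericalNOTEXIST} here; instead it projects twice, from $\langle\Lambda_2,\ldots,\Lambda_r\rangle$ and from $\langle\Lambda_1,\Lambda_3,\ldots,\Lambda_r\rangle$, obtaining two generic sets of $s\geq m+3$ points in $\PP^m$. A feasible rnc would project to an rnc through each set, forcing the two point sets to be projectively equivalent via the common $\PP^1$-parameter, which is impossible for generic sets of more than $m+3$ points. Your approach---keeping both $\Lambda_1$ and $\Lambda_2$ in $\PP^{2m}$ and checking that $2m^2+s(2m-1)>(2m+3)(2m-1)$ exactly when $s\geq m+3$---is a clean alternative that unifies the two subcases of your second regime under a single numerical criterion. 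The paper's route is more conceptual (it exploits the moduli of point configurations and the rigidity implicit in Theorem \ref{Veronese}); yours is more economical, since it avoids introducing a new style of argument for a single subcase.
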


\begin{proof}
By Theorem \ref{SEGREteo}, we only have to show that the numerical
conditions on $s$ are necessary, more precisely, we have to prove
that if $s \geq n_1+4$, or $s \geq n_2+3$ and $n_1=1$ or
$n_1=n_2$, then $\Lambda$ is non-feasible.

Case:  $s \geq n_1+4$.
Consider the projection
from the linear span
\[\langle\Lambda_2\cup\ldots\cup\Lambda_r\rangle\]
mapping $\PP^n$ onto $\Pi_1 \simeq \PP^{n_1}$ and the $s$ points onto $s$ generic points
in $\Pi_1$ . Since  in $\PP^{n_1}$
there is not an rnc through $s \geq n_1+4$ generic points,  we conclude by Lemma \ref{PROJlemma}.

Case: $s \geq n_2+3$ and $n_1=1$.
In this case,  $\Lambda_1$ is a point, say $P$. Consider the projection
from the linear space
\[\langle \Lambda_3\cup \ldots\cup\Lambda_r\rangle,\]
whose dimension is $(n_3+ \ldots +n_r )= n-n_2-1$,
mapping $\PP^n$ onto $\Pi_2 \simeq \PP^{n_2+1}$,
and the  $s+1$ points $P,P_1, \ldots, P_s$ and  $\Lambda_2$  onto a generic configuration
\[ P' \cup P'_1 \cup \ldots  \cup P'_s \cup \Lambda'_2 \subset \Pi_2 . \]
Since  in $\Pi_2$ the configuration above is non-feasible
(see Proposition \ref{numericalNOTEXIST}), then  we conclude by Lemma \ref{PROJlemma}.

Case $s \geq n_2+3$, and  $n_1=n_2 $.  Set  $m=n_1=n_2 $.

By projecting $\PP^n$ from the linear spaces $
\langle\Lambda_2\cup\ldots\cup\Lambda_r\rangle $ and $
\langle\Lambda_1\cup\Lambda_3\cup\ldots\cup\Lambda_r\rangle $ onto
$ \Pi_3 \simeq \PP^{m}$, one sees that the images of the points
$P_i$ give two sets of $s \geq m+3$ generic points in $\PP^m$,
which are not projectively equivalent, but they would be if
$\Lambda$ were feasible.

\end{proof}

\begin{ex} As a straightforward application of the previous
Theorem, we have that the weight vector $(p,1,1,0)$ is feasible if
and only if $p\leq 5$.
\end{ex}

\section{Applications}\label{SECapp}

\subsection{Homogenous configurations}\label{HOMOGENEOUSsec}
We can focus our attention on homogeneous configurations of linear
spaces, i.e.  configurations whose components all have the same
dimension; notice that this is the setting of Veronese's Theorem
\ref{Veronese} where only points are involved. In this simplified
situation, Propositions \ref{numericalEXIST} and
\ref{numericalNOTEXIST} are enough to determine whether $\Lambda$
is feasible or non-feasible in almost all cases. More precisely,
the following proposition shows that for $n\gg 0$ there is at most
one case not covered.

\begin{prop} \label{homoprop}
Let $n \geq 3$, and let $\Lambda\subset\PP^n$ be a  homogeneous
configuration of
$l$ generic
$i$-dimensional linear spaces.  If  $n> i^2 +5i +1$, then
 $\Lambda$ is feasible for
$l \leq   { {n+3} \over {i+1}}   $, and $\Lambda$ is non-feasible
for   $l >  \lceil{ {n+3} \over {i+1}} \rceil $.
\end{prop}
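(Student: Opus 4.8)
The plan is to derive both halves of the statement directly from the two numerical criteria already established: Proposition \ref{numericalEXIST} for feasibility and Proposition \ref{numericalNOTEXIST} for non-feasibility, specialized to the homogeneous weight vector $L$ with $l_i = l$ and $l_j = 0$ for $j \neq i$. For this $L$ the two relevant sums collapse to single terms, $\sum_j (j+1)l_j = (i+1)l$ and $\sum_j (j+1)(n-1-j)l_j = (i+1)(n-1-i)l$. I would first record that the hypothesis $n > i^2+5i+1$ forces $n \geq i+2$, so that $i \leq n-2$ and $n-1-i \geq 1$; in particular $L$ is a genuine weight vector in $\PP^n$ and the factor $n-1-i$ appearing below is strictly positive.

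Feasibility is immediate and uses nothing about the size of $n$: if $l \leq \frac{n+3}{i+1}$ then $(i+1)l \leq n+3$, so Proposition \ref{numericalEXIST} applies and $\Lambda$ is feasible.

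For non-feasibility I would invoke Proposition \ref{numericalNOTEXIST}, which guarantees non-feasibility as soon as $(i+1)(n-1-i)l > (n+3)(n-1)$, that is, as soon as $l > \frac{(n+3)(n-1)}{(i+1)(n-1-i)}$. Since this condition is monotone increasing in $l$, it suffices to verify it at the smallest integer exceeding $\lceil \frac{n+3}{i+1}\rceil$, namely $l_0 = \lceil \frac{n+3}{i+1}\rceil + 1$. From $\lceil \frac{n+3}{i+1}\rceil \geq \frac{n+3}{i+1}$ we get $l_0 \geq \frac{n+i+4}{i+1}$, so it is enough to establish the strict inequality $\frac{n+i+4}{i+1} > \frac{(n+3)(n-1)}{(i+1)(n-1-i)}$. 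Cancelling the common positive factor $i+1$ and clearing the positive denominator $n-1-i$, this is equivalent to $(n+i+4)(n-1-i) > (n+3)(n-1)$.

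The single computation carrying the weight of the theorem is this last inequality. Expanding the left-hand side gives $n^2 + 3n - i^2 - 5i - 4$ and the right-hand side gives $n^2 + 2n - 3$, so after cancellation the inequality reads exactly $n - i^2 - 5i - 1 > 0$, i.e.\ $n > i^2 + 5i + 1$. Thus the cutoff in the hypothesis is precisely the one dictated by Proposition \ref{numericalNOTEXIST}, and non-feasibility for every $l > \lceil \frac{n+3}{i+1}\rceil$ follows. I do not anticipate a genuine obstacle beyond this bookkeeping; the only delicate point is the interplay between the ceiling function and the rational threshold $\frac{(n+3)(n-1)}{(i+1)(n-1-i)}$, which is exactly what can leave the single integer value $l = \lceil \frac{n+3}{i+1}\rceil$ (when $i+1$ fails to divide $n+3$) outside the reach of both criteria, accounting for the ``at most one case not covered'' mentioned above.
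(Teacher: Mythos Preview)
Your proof is correct and follows essentially the same approach as the paper: both arguments specialize Propositions \ref{numericalEXIST} and \ref{numericalNOTEXIST} to the homogeneous weight vector and reduce the non-feasibility part to the inequality $\frac{n+3}{i+1}+1>\frac{(n+3)(n-1)}{(i+1)(n-1-i)}$, which unwinds exactly to $n>i^2+5i+1$. Your version is in fact a bit tidier, since by using $\lceil\frac{n+3}{i+1}\rceil+1\geq\frac{n+3}{i+1}+1$ you bypass the paper's case split on whether $i+1$ divides $n+3$.
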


\begin{proof}
By Propositions \ref{numericalEXIST}
and \ref{numericalNOTEXIST} we have only to consider the values of $l$ such that
\[  {n+3 \over i+1}  <l \leq {(n+3)(n-1)\over (i+1)(n-1-i)} ,\]
as Proposition \ref{numericalEXIST} gives feasibility for $l \leq {n+3 \over i+1}$ and
Proposition \ref{numericalNOTEXIST} shows non-feasibility for
$l> {(n+3)(n-1)\over (i+1)(n-1-i)}$.
\par
 It is easy to show that,  for $n> i^2 +5i +1$, we have
 \begin{eqnarray}\label{DISEQ}
   {(n+3)(n-1)\over (i+1)(n-1-i)} -  {n+3 \over i+1}   <1  .
    \end{eqnarray}

  Hence if   $i+1$  divides $n+3$,  then
 $l> {(n+3)(n-1)\over (i+1)(n-1-i)}$ is equivalent to
 $l >  \lceil{ {n+3} \over {i+1}} \rceil = { {n+3} \over {i+1}}$, and the conclusion follows.
 \par
 If $i+1$ does not divide $n+3$, let $q, r   \in  \Bbb N$ be such that
 \[   n+3 = q (i+1) + r  \ , \ \ \ \   0 <  r  < i +1 \  .
 \]
  By a direct computation we get
  \[   {(n+3)(n-1)\over (i+1)(n-1-i)} -  q   > 1  ,\]
  hence by (\ref{DISEQ})  we have

 \[
1 + q <  {(n+3)(n-1)\over (i+1)(n-1-i)}  <1 +  {n+3 \over i+1}   = 1+q+ {r \over {i+1}}
 \]
  then
 $l> {(n+3)(n-1)\over (i+1)(n-1-i)}$ is equivalent to
 $l > q+1 =  \lceil{ {n+3} \over {i+1}} \rceil $, and we are done.
  \end{proof}

If the components of $\Lambda$ are all lines, we have an
interesting result, which covers all cases except for
$(n,l)=(5,5)$ and  $(n,l)=(7,6)$, that is it is not know whether
configurations of five generic lines in $\PP^5$, or of six generic
lines in $\PP^7$ are feasible or not.

\begin{prop}\label{lineprop}
Let  $n \geq 3$,  let $\Lambda\subset\PP^n$ be the union of $l$ generic
lines,
then
\par
{\rm i)} $\Lambda$ is feasible in the following cases:
\begin{itemize}
\item
$n =3$ and $l \leq 6$;
\item   $n >3$  and $l \leq \lfloor {n\over 2} \rfloor+2$;
\end{itemize}
\par
{\rm ii)} $\Lambda$ is non-feasible in the following cases:
 \begin{itemize}
\item $n =3$ and $l \geq 7$;
\item $n=5$ and $l\geq 6$;
\item  $n=7$ and $l \geq 7$;
\item $n >3$, $n \neq5$,  $n \neq7$ and $l \geq \lfloor {n\over 2}
\rfloor+3$.

\end{itemize}
\end{prop}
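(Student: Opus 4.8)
The plan is to push every case to an extreme value of $l$ by monotonicity and then dispatch that value with the tools already developed. First I would record two elementary reductions. \emph{Monotonicity}: if a generic configuration of $l$ lines is feasible then so is a generic configuration of $l-1$ lines, since dropping one line from a generic $l$-tuple gives a generic $(l-1)$-tuple and only relaxes the secancy conditions; dually, non-feasibility passes from $l$ to $l+1$. Hence it suffices to prove feasibility at the top claimed $l$ and non-feasibility at the bottom claimed $l$ in each range. \emph{Pairing}: if an rnc $\mathcal{C}$ contains two points $P,Q$ then the line $\overline{PQ}$ is automatically $2$-secant to $\mathcal{C}$, because an rnc meets a line in at most two points; thus any construction forcing $\mathcal{C}$ through $2k$ generic points makes $\mathcal{C}$ maximally secant to $k$ generic lines.

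For part (i) the odd case is immediate: for $n$ odd one has $2(\lfloor n/2\rfloor+2)=n+3$, so Proposition \ref{numericalEXIST} already yields feasibility for all $l\le\lfloor n/2\rfloor+2$. For $n=2m$ even, Proposition \ref{numericalEXIST} stops one line short (at $m+1$), and I would instead invoke Theorem \ref{SEGREteo} for the Segre product $\PP^2\times\cdots\times\PP^2$ with $m$ factors, so that $\sum n_i=n$: since $n_1=n_2=2$ it produces an rnc that is $2$-secant to $m$ generic lines and passes through $s=4$ generic points. Pairing the four points into two lines as above promotes the configuration to $m+2=\lfloor n/2\rfloor+2$ generic lines. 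The case $n=3$ is separate: Theorem \ref{NOSTROcod2} gives feasibility for two points with four lines and for one point with five lines, and forgetting the points shows that four and five generic lines are feasible, which together with monotonicity settles $l\le 5$.

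For part (ii) the workhorse is Proposition \ref{numericalNOTEXIST}, whose hypothesis for lines reads $l>\frac{(n+3)(n-1)}{2(n-2)}$. The identity $\frac{(n+3)(n-1)}{2(n-2)}=\frac{n}{2}+2+\frac{5}{2(n-2)}$ lets me read off the integer threshold: it gives non-feasibility for $n=3,\ l\ge 7$; for $n=5,\ l\ge 6$; for $n=7,\ l\ge 7$; and, since the fractional term is $<1$ for even $n\ge 6$ and $<\frac12$ for odd $n\ge 9$, for all remaining $n\ge 6$ with $n\ne 7$ at $l\ge\lfloor n/2\rfloor+3$. Monotonicity then covers the larger $l$. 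The single value inside the claimed range that this proposition misses is $n=4,\ l=5$ (there the bound is $\frac{21}{4}$, forcing only $l\ge 6$); for it I would apply the Bezout criterion, Proposition \ref{bezoutPROP}, taking $\Gamma$ to be one of the five lines ($k=1$) and $d=2$. Five generic lines impose independent conditions on the $15$ quadrics of $\PP^4$, so $H(\Lambda,2)=15=12+\binom{3}{1}=H(\Lambda',2)+\binom{d+k}{k}$, while $\sum l_i(i+1)-1=9>8=dn$; the proposition then forbids an rnc $2$-secant to five general lines in $\PP^4$.

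The genuinely delicate point, which I expect to be the main obstacle, is the balanced case $n=3,\ l=6$: here $\dim\Sigma=\dim\mathcal{D}=24$, so no dimension count can decide the issue, and feasibility is exactly the statement that the incidence projection $\psi$ is dominant, i.e.\ that a positive number of twisted cubics is $2$-secant to six general lines. I would establish dominance by exhibiting a single reduced point of a general fibre, for instance by specializing the six lines to a configuration adapted to a smooth quadric surface (on which bidegree-$(1,2)$ rational cubics can be written down and counted) and checking transversality, or by quoting the classical characteristic-number computation for twisted cubics. Two routine genericity checks would remain: that the lines obtained by pairing generic points are themselves generic and independent of the Segre lines, and that generic lines in $\PP^4$ impose independent conditions on quadrics, as used in the Hilbert-function identity for the Bezout step.
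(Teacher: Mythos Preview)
Your proposal is correct and matches the paper's proof almost step for step: same use of Proposition~\ref{numericalEXIST} for odd $n$, same Segre construction with $n_1=\cdots=n_{n/2}=2$ and $s=4$ plus the point-pairing trick for even $n$, same direct application of Proposition~\ref{numericalNOTEXIST} (the paper routes $n>7$ through Proposition~\ref{homoprop}, but that is the same inequality), and the same Bezout argument with $d=2,\ k=1$ for $(n,l)=(4,5)$, where the paper cites \cite{HartHir} for the Hilbert function you assert. The only point where you diverge is $n=3$: the paper simply cites \cite{Wakeford} and \cite[Proposition~3.1]{CaCa} for all $l\le 6$, so your quadric-surface sketch for $l=6$ is unnecessary---your own fallback of ``quoting the classical characteristic-number computation'' is exactly what the paper does.
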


\begin{proof}
{\rm i)} For  $n =3$ see
\cite{Wakeford}, or  \cite[Proposition 3.1]{CaCa}.
\par
If  $n>3$ is odd, by Proposition
\ref{numericalEXIST} we have that $\Lambda$ is feasible if $2l\leq n+3$,
that is for $l\leq \lfloor {n\over 2} \rfloor+2$. \par
If $n>3$ is even, by applying Theorem \ref{SEGREteoiff}
with $s=4$, $r= {n\over 2} $, $n_1= \ldots =n_t =2$, we
have that a generic configuration of ${n\over 2}$ lines and $4$ points
\[\Lambda_1\cup\ldots\cup\Lambda_{n\over 2}\cup\{P_1,\ldots,P_4\} \subset
\PP^n
\]
is feasible. Hence by choosing $P_1,P_2$ on a  generic line and
$P_3,P_4$ on another  generic line, we get that $\Lambda$ is
feasible for  $l = {n\over 2}+2$, and so for  $l \leq {n\over
2}+2$ also.
\par
{\rm ii)} For $n >7$, by Proposition \ref{homoprop} we immediately
get the conclusion. \par Let $n \leq 7$. By Proposition
\ref{numericalNOTEXIST} we have that the configuration is
non-feasible if $l> {{(n+3)(n-1)} \over {2(n-2)}}$, so the
conclusion easily follows for $n\neq 4 $, or for $n= 4 $ and $l
\geq 6$.
\par
Finally for the case $(n,l)=(4,5)$,  use the Bezout argument of
Proposition \ref{bezoutPROP} with $d=2$ and $k=1$. Notice that in
this case it is crucial to have the knowledge of the Hilbert
function of a generic set of lines, see for example
\cite{HartHir}.
\end{proof}

\subsection{One space in each dimension} Here we consider configurations of linear spaces having one component in each possible dimension.

\begin{prop}\label{onespaceineachPROP}
Consider the weight vector $L=(1,\ldots,1)$ in $\PP^n$.  Then $L$
is feasible for $n\leq 5$ and non-feasible for $n\geq 8$.
\end{prop}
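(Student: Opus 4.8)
The plan is to split the range of $n$ into three parts, dispatch the two extremes with the numerical criteria already available, and reserve the genuine work for $n=5$. For the weight vector $L=(1,\ldots,1)$ one has $l_i=1$ for $0\le i\le n-2$. For non-feasibility when $n\geq 8$ I would apply Proposition \ref{numericalNOTEXIST}: the relevant sum has the closed form
\[
\sum_{i=0}^{n-2}(i+1)(n-1-i)=\frac{(n-1)n(n+1)}{6},
\]
so, cancelling the factor $n-1>0$, the condition $\frac{(n-1)n(n+1)}{6}>(n+3)(n-1)$ reduces to $n^2-5n-18>0$, which holds exactly for $n\geq 8$. For feasibility when $n\leq 4$ I would apply Proposition \ref{numericalEXIST}: here $\sum_{i=0}^{n-2}(i+1)=\frac{n(n-1)}{2}$, and the inequality $\frac{n(n-1)}{2}\leq n+3$ is equivalent to $n^2-3n-6\leq 0$, valid precisely for $n\leq 4$. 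At $n=5$ this last sum equals $10>8$, so Proposition \ref{numericalEXIST} no longer applies, and this borderline case is the real content of the statement.

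The main obstacle is the feasibility of $(1,1,1,1)$ in $\PP^5$, that is a point, a line, a plane and a $3$-space. The plan is to extract the rnc from a Segre contraction and to realise the two leftover components as spans of free points on the curve. I would take $\PP^1\times\PP^4$, whose map $\phi_{\mathbf T}:\PP^1\times\PP^4\dashrightarrow\PP^5$ contracts the divisor $H_1\times\PP^4$ onto a generic point $\Lambda_1$ and the divisor $\PP^1\times H_2$ onto a generic $3$-space $\Lambda_2$, these being generic by Lemma \ref{SEGREcurvelemma}. Theorem \ref{SEGREteo}, applied with $n_1=1$, $n_2=4$ and $s=5\leq n_2+2$, then produces an rnc $\mathcal{C}\subset\PP^5$ passing through $5$ generic points $P_1,\ldots,P_5$, through $\Lambda_1$, and $4$-secant to $\Lambda_2$. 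Setting $\ell=\langle P_1,P_2\rangle$ and $\pi=\langle P_3,P_4,P_5\rangle$, genericity of the $P_i$ makes $\ell$ a generic line and $\pi$ a generic plane, and by construction $\mathcal{C}$ passes through $P_1,P_2\in\ell$ and through $P_3,P_4,P_5\in\pi$.

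To promote these incidences to maximal intersections I would use the elementary fact that any $j\leq n+1$ distinct points of an rnc in $\PP^n$ are linearly independent; in $\PP^5$ this forces an rnc to meet a line in at most $2$ points and a plane in at most $3$ points, so $\mathcal{C}$ is exactly $2$-secant to $\ell$ and $3$-secant to $\pi$. Hence $\mathcal{C}$ maximally meets the configuration $\Lambda_1\cup\ell\cup\pi\cup\Lambda_2$, which has weight $(1,1,1,1)$. The one point needing care is the genericity of this configuration: $\Lambda_1,\Lambda_2$ are generic by Lemma \ref{SEGREcurvelemma}, while $\ell,\pi$ are spanned by points chosen generically and independently of $\Lambda_1,\Lambda_2$, so the four components are jointly generic and the incidence projection $\psi$ of Subsection \ref{paramterSUBSEC} is dominant. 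This settles feasibility at $n=5$ and completes the three ranges, leaving $n=6,7$ open.
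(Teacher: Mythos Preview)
Your proof is correct and follows essentially the same strategy as the paper: dispatch $n\le 4$ and $n\ge 8$ by the numerical criteria of Propositions~\ref{numericalEXIST} and~\ref{numericalNOTEXIST}, and settle $n=5$ via Theorem~\ref{SEGREteo} by grouping the free points into spans. The only difference is the choice of Segre factorisation: the paper uses $\PP^2\times\PP^3$ (yielding a line, a plane, and five points, four of which are placed on a generic $3$-space) while you use $\PP^1\times\PP^4$ (yielding a point, a $3$-space, and five points, regrouped as a line and a plane); both instantiations are valid.
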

\begin{proof}
We apply Propositions \ref{numericalEXIST} and \ref{numericalNOTEXIST}
to get that $L$ is feasible for
\[n+3\geq\sum_{i=0}^{n-2}(i+1)={n \choose 2}\]
and $L$ is non-feasible for
\[(n+3)(n-1)<\sum_{i=0}^{n-2}(i+1)(n-i-1)={n+1 \choose 3}.\]
Solving these inequality we are left with the cases $5\leq n\leq 7$.
\par
By applying Theorem \ref{SEGREteo} with $r=2$, $n_1=2$ and $n_2=3$
we get that the following generic configuration is feasible
\[  \Lambda_1 \cup \Lambda_2 \cup \{P_1, \ldots,P_5\} \subset \PP^5,
\]
where $ \Lambda_1$ is  a line,  $\Lambda_2$ is a plane, and the $P_i$ are
points.
Hence by choosing $P_1,P_2,P_3,P_4$ on a  generic linear space of dimension $3$, we
get that $L=(1,1,1,1)$ is feasible in $\PP^5$.
\end{proof}

We notice again that, even in this simplified situation, there are
cases which we can not solve.

\subsection{The weight vectors $\mathbf{(0,3,0,\ldots,0,2,0)} $ are non-feasible}
In this subsection we study configurations of three lines and two spaces
of codimension 3.  In the particular case of $\PP^4$, we
have already proved  the non-feasibility of five lines (see Proposition
\ref{lineprop}). The following proposition generalizes that result.

\begin{prop}
The weight vector $L=(0,3,0,\ldots,0,2,0)$ in $\PP^n$, $n \geq 5$,  is
non-feasible.
\end{prop}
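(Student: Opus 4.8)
The statement concerns the weight vector $L=(0,3,0,\ldots,0,2,0)$ in $\PP^n$, $n\geq 5$: three generic lines together with two generic linear spaces of codimension $3$ (i.e. of dimension $n-3$). The plan is to use the projection argument of Lemma \ref{PROJlemma} to reduce the problem to a situation we already understand. The guiding idea is that the two large spaces of dimension $n-3$ carry almost all of the ambient dimension, so projecting from one of them should collapse the configuration into a small projective space where the three lines become the dominant obstruction.

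**The main reduction.**
First I would project $\PP^n$ from one of the two codimension-three spaces, say $\Lambda_5$, which has dimension $n-3$. This gives a map
\[
\pi:\PP^n\longrightarrow\PP^{2},
\]
since $n-(n-3)-1=2$. Under $\pi$ the three generic lines $\Lambda_1,\Lambda_2,\Lambda_3$ map to three generic curves in $\PP^2$, and the remaining codimension-three space $\Lambda_4$ maps onto all of $\PP^2$ (or at least fails to impose a maximal-secancy condition after projection), so the relevant surviving constraints come from the images of the three lines. The difficulty is that a line need not project to a line under projection from a space it meets; but by genericity $\Lambda_i$ is disjoint from $\Lambda_5$, so each $\pi(\Lambda_i)$ is again a line (a generic one) in $\PP^2$. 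Thus $\pi(\Lambda)$ contains, as a subconfiguration, three generic lines in $\PP^2$, and by Lemma \ref{PROJlemma} it suffices to show this projected configuration is non-feasible.

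**Finishing in $\PP^2$.**
In $\PP^2$ a rational normal curve is a conic, meeting any line in exactly two points — which is already the maximal intersection for a line in $\PP^2$. So three generic lines impose, a priori, a feasible-looking count; the obstruction must instead come from combining the three lines with the image of $\Lambda_4$. A cleaner route is to project instead onto a slightly larger space: projecting only from $\Lambda_5$ lands in $\PP^2$, but I would choose the target so that both the three lines and (part of) $\Lambda_4$ survive as genuine secancy conditions, and then invoke either Proposition \ref{numericalNOTEXIST} or the Bezout argument of Proposition \ref{bezoutPROP}. Concretely, checking the numerical inequality of Proposition \ref{numericalNOTEXIST} on the projected weight vector, or applying Proposition \ref{bezoutPROP} with $\Gamma$ equal to the image of a line and a suitable degree $d$, should yield the contradiction, since the $n=4$, $l=5$ lines case was already settled this way in Proposition \ref{lineprop}.

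**The main obstacle.**
The hard part is controlling exactly \emph{what} the two codimension-three spaces project to, and ensuring that after projection enough secancy conditions survive to trigger a non-feasibility criterion rather than trivializing into an already-feasible low-dimensional configuration. In particular, projecting from an $(n-3)$-space all the way down to $\PP^2$ may discard too much information (three lines in $\PP^2$ alone are feasible via a conic), so the delicate choice is which center to project from, and whether to project in stages, so that the residual configuration in the target $\PP^m$ inherits precisely the overdetermined count that forces non-feasibility — ideally reducing, for every $n\geq 5$, to the already-established non-feasible case of five lines in $\PP^4$.
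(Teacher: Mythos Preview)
Your projection strategy does not close. Projecting from one of the $(n-3)$-dimensional components sends $\PP^n$ to $\PP^2$; there the three lines become hyperplanes, which every conic meets in two points automatically, and the second $(n-3)$-space surjects onto $\PP^2$. So the projected configuration is trivially feasible and Lemma~\ref{PROJlemma} gives nothing. Projecting instead from one of the lines lands in $\PP^{n-2}$, where the two $(n-3)$-spaces become hyperplanes (again imposing no condition) and you are left with two lines in $\PP^{n-2}$, which is feasible. No iteration of these moves, and no projection from a span of components, produces five lines in $\PP^4$: for both $(n-3)$-spaces to project to lines in $\PP^4$ the center would have to lie inside $\Lambda_4\cap\Lambda_5$, whose dimension $n-6$ is strictly smaller than the required $n-5$. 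So the reduction you are hoping for cannot be realized by projection.

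The paper proceeds differently, via Proposition~\ref{bezoutPROP} applied directly in $\PP^n$ with $d=2$ and $\Gamma=\Gamma_1$ one of the lines. The numerical inequality $\sum l_i(i+1)-1=2n+1>2n$ is immediate; the work is in the Hilbert-function hypothesis, which amounts to showing that a generic line imposes three independent conditions on quadrics through $\Lambda'=\Gamma_2\cup\Gamma_3\cup\Lambda_1\cup\Lambda_2$. This in turn reduces to showing $(I_{\Lambda_L})_2=0$, proved by induction on $n$ using the residual exact sequence for a hyperplane containing the three lines. Your intuition that five lines in $\PP^4$ should enter is correct, but it appears as the base of this Hilbert-function induction (via the trace on a hyperplane), not as the target of a projection.
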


\begin{proof}
Let
\[\Lambda_L =\Gamma_1 \cup \Gamma_2 \cup \Gamma_3 \cup \Lambda_1 \cup \Lambda_2  \subset \PP^n\]
be a generic configuration of
linear spaces of weight $L$, where
$\Gamma_i \simeq \PP^1$  , $i=1,2,3$, and $\Lambda_j \simeq \PP^{n-3}$ ,
$j=1,2$, and let
\[\Lambda ' = \Lambda_L  \setminus  \Gamma_1  =\Gamma_2 \cup \Gamma_3 \cup
\Lambda_1 \cup \Lambda_2.\] By Proposition \ref{bezoutPROP} we are done if we prove
that there exists an integer $d$ such that
\[\sum_{i=0}^{n-2}l_i(i+1)-1>dn,\]
and
\[H(\Lambda_L,d)=H(\Lambda',d)+{d+1\choose 1}.\]
Let $d=2$. Since:
\[\sum_{i=0}^{n-2}l_i(i+1)-1-dn =  6 +  2(n-2)-1-2n =1 >0 \]
it suffices to show that a generic  line imposes $3$ independent
conditions to the hyperquadrics containing $\Lambda '$.

It is easy to prove that
\[H(\Lambda_1 \cup \Lambda_2,2)=
H(\Lambda_1, 2)+ H(\Lambda_2, 2) - H(\Lambda_1 \cap \Lambda_2,2), \]
so, since for $n=4,\ 5$ we have  $\Lambda_1 \cap \Lambda_2 = \emptyset$, and for $n>5$ we have $\Lambda_1 \cap \Lambda_2 \ \simeq \PP^{n-6}$, we get
\[H(\Lambda_1 \cup \Lambda_2,2)= 2 {{n-3+2} \ \choose 2} - {{n-6+2} \ \choose 2} ={{n^2+3n-16}\over 2}. \]
Now  we expect that three generic lines impose $9$ conditions to
the hyperquadrics  containing $\Lambda_1 \cap \Lambda_2$, in other
words, we  expect that the  Hilbert function of $\Lambda_L$ in
degree $2$ is
\[ {\rm expected} \ H(\Lambda_L,2) = 9+ {{n^2+3n-16}\over 2 }= {{n+2} \choose 2}. \]
So, if we prove that there are not forms of degree $2$ in the ideal $I _{\Lambda_L}$, then
it  follows that  there are exactly three forms of degree two in  ${(I _{\Lambda'})}_2$, and
we are done.

We work by induction on $n$.  \par

For $n=5$, first specialize  the three  lines $\Gamma_i$ onto a
hyperplane $H =\{f=0\}$,

Consider the exact sequence,
\[  0  \longrightarrow (I _{\Lambda_L \setminus H})_1 \longrightarrow  (I _{\Lambda_L})_2
\longrightarrow  (I _{\Lambda_L \cap H})_2 ,
 \]
 where the first map is given by the multiplication by $f$, and
 where $\Lambda_L \setminus H$ is the residual of $\Lambda_L$
 with respect to $H$, and  $\Lambda_L \cap H$ is  the trace of $\Lambda_L $ on $H$.
 \par
Since  $\Lambda_L \setminus H = \Lambda_1 \cup \Lambda_2$,  and it
does not lie on a hyperplane, then $\dim (I _{\Lambda_L \setminus
H})_1=0$. Moreover, since $\Lambda_L \cap H$ consists of five
lines in $\PP^4$, we get $\dim (I _{\Lambda_L \cap H})_2 =0$ (see
\cite{HartHir}). The conclusion follows from the  semicontinuity
of the Hilbert function.
\par
Now assume $n>5$ and
let $H=\{f=0\}$ be a hyperplane containing the three lines $\Gamma_i$.
Consider the following exact sequence analogous to the one above,
\[  0  \longrightarrow (I _{\Lambda_L \setminus H})_1 \longrightarrow  (I _{\Lambda_L})_2
\longrightarrow  (I _{\Lambda_L \cap H})_2 .
 \]
Since  $\Lambda_L \setminus H = \Lambda_1 \cup \Lambda_2$ does not lie on a
hyperplane, then $\dim (I _{\Lambda_L
\setminus H})_1=0$. Moreover $\dim (I _{\Lambda_L \cap H})_2 =0$
follows by the inductive hypothesis, so we get   $\dim (I
_{\Lambda_L })_2 =0$ and we are done.
\end{proof}

\subsection{A new result about defectiveness  of higher secant
varieties of  Segre-Veronese varieties.} As noted in Remark
\ref{SEGREVERONESErem} the existence of a suitable rnc  yields
that  the Segre-Veronese variety $\PP^2\times\PP^3$ embedded using
forms of bidegree $(1,2)$ is $5$-defective. By Theorem
\ref{SEGREteo}  we get  the following new result involving
Segre-Veronese varieties, for more on these varieties see
\cite{CGG3,CGG4}.

\begin{cor}\label{SEGREVERONESEcor}
Let $V \subset \PP^N$ be the Segre-Veronese variety $\PP^1 \times \PP^m  \times \PP^m$
embedded with multi-degree
$(2,1,1)$. Let   $V^s$ be the $s^{th}$  higher secant variety of $V$, that is  the closure
 of the union of all linear spaces spanned by s independent points of $V$. If
\[m+2 \leq s  \leq 2m+1 ,
 \]
then $V$ is $s$-defective, i.e. $V^s$ has not the expected dimension.

  \end{cor}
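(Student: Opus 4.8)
The plan is to translate the defectivity statement into the existence of a rational normal curve and then invoke Theorem \ref{SEGREteo}, exactly as the case $\PP^2\times\PP^3$ was handled in Remark \ref{SEGREVERONESErem}. First I would set up the Terracini dictionary. Writing $X=\PP^1\times\PP^m\times\PP^m$, one has $\dim V=2m+1$ and $N+1=\dim H^0(X,\mathcal O_X(2,1,1))=3(m+1)^2$, and Terracini's lemma identifies the affine tangent space to $V$ at $p=([u],[v],[w])$ with the multidegree-$(2,1,1)$ part of the ideal of the first infinitesimal neighbourhood $2p$. Hence $V$ is $s$-defective if and only if $s$ general double points $Z=2p_1+\cdots+2p_s$ fail to impose independent conditions on $|\mathcal O_X(2,1,1)|$, that is
\[
\dim (I_Z)_{(2,1,1)}>\max\{0,\,3(m+1)^2-s(2m+2)\}.
\]
So the problem becomes the production, for each $s$ in the stated window, of more sections of $\mathcal O_X(2,1,1)$ singular at $s$ general points than this count predicts.

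Second, I would make the sections geometric. A form $F$ of multidegree $(2,1,1)$ is the same datum as an $(m+1)\times(m+1)$ matrix $\mathbf M(t)$ of binary quadrics, via $F=v^{\top}\mathbf M(t)w$; and $F$ is singular at $p_j=(t_j,[v_j],[w_j])$ precisely when $\mathbf M(t_j)w_j=0$, $\mathbf M(t_j)^{\top}v_j=0$ and $v_j^{\top}\mathbf M'(t_j)w_j=0$. The crucial observation is that the last, derivative, condition is automatic as soon as $\mathbf M$ carries global kernel and cokernel curves $\kappa,\lambda$ with $\kappa(t_j)=w_j$ and $\lambda(t_j)=v_j$: differentiating $\mathbf M(t)\kappa(t)\equiv 0$ and pairing with $\lambda(t)$ kills the extra term because $\lambda^{\top}\mathbf M\equiv 0$. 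Thus it suffices to build a matrix $\mathbf M$ of small corank whose kernel and cokernel loci meet the prescribed general points $[w_j],[v_j]$ at common parameters. This is where Theorem \ref{SEGREteo} (equivalently Proposition \ref{SEGREcurveprop}) is meant to enter: the pair $(\kappa,\lambda)$ is a rational curve on a Segre variety with factors of dimension $m$ passing through general points, the degree-$2$ behaviour in the $\PP^1$ factor being absorbed by viewing $\mathbf M$ as the pull-back of a pencil under the degree-two map $\PP^1\to\PP^1$ attached to $\mathcal O_{\PP^1}(2)$. Feasibility of the associated configuration of linear spaces and points, supplied by Theorem \ref{SEGREteo} for the admissible number of points, then yields the desired singular sections.

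Third, I would run the dimension count. Varying $(\kappa,\lambda)$ in its feasible family and the compatible $\mathbf M$ over it produces a family of singular sections whose dimension I would compare with $\max\{0,\,3(m+1)^2-s(2m+2)\}$, the claim being that it is strictly larger exactly for $m+2\le s\le 2m+1$. The upper endpoint $s=2m+1=\dim V$ is the value at which $V^s$ is expected to fill $\PP^N$, so non-filling there amounts to the existence of a single such section; the lower endpoint $m+2$ is the threshold below which the constructed family no longer beats the (now strictly positive) expected dimension, so that outside the window the method yields no defect.

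The hard part will be twofold. First, the assembly step: realizing a matrix of binary quadrics of the right corank with prescribed kernel and cokernel rational normal curves is a determinantal, Kronecker-pencil type construction, and here one must both identify the exact Segre factors $(n_1,\ldots,n_r)$ for which Theorem \ref{SEGREteo} applies and verify that the induced configuration of points on the Segre variety is genuinely general. Second, the bookkeeping that pins the endpoints: since Theorem \ref{SEGREteo} only provides a one-sided bound on the feasible point-count, the precise translation between the secant index $s$ and that count is what has to be organized so as to produce the full interval $[m+2,\,2m+1]$ rather than a single inequality. I expect the endpoint analysis, and the genericity check in the assembly, to be the main obstacles.
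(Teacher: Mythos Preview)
Your plan is not the paper's argument, and as written it is not yet a proof. The paper does \emph{not} work directly with the matrix realisation of $(2,1,1)$-forms. Instead it invokes \cite[Theorem~1.5]{CGG3} to identify $\dim V^s$ with $\dim(I_{W\cup Z})_4$ for a scheme in $\PP^{2m+1}$: here $W$ consists of one double point $Q$ and two $(m-1)$-dimensional linear spaces $\Lambda_1,\Lambda_2$ taken with multiplicity three, and $Z$ is $s$ general double points $P_1,\ldots,P_s$. Proposition~\ref{SEGREcurveprop} (applied with $n_1=1$, $n_2=n_3=m$, so $n=2m+1$) then furnishes an rnc $\mathcal C\subset\PP^{2m+1}$ through $Q,P_1,\ldots,P_s$ and $m$-secant to each $\Lambda_i$. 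For any $F\in(I_{W\cup Z'})_4$ with $Z'$ having $P_s$ simple, the intersection with $\mathcal C$ has degree at least $6m+2s+1$, which exceeds $4\deg\mathcal C=8m+4$ once $s\ge m+2$; Bezout forces $\mathcal C\subset\{F=0\}$, so the tangent direction to $\mathcal C$ at $P_s$ is automatic and the $s$-th double point imposes fewer than $2m+2$ conditions. The range $s\ge\lceil 3(m+1)/2\rceil$ is cited from \cite{CGG3}. Thus the rnc enters via a Bezout containment argument after a change of ambient space, not via any kernel/cokernel construction.

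Your own scheme has concrete gaps. The ``degree-two map $\PP^1\to\PP^1$ attached to $\mathcal O_{\PP^1}(2)$'' does not exist (that line bundle gives the Veronese conic $\PP^1\hookrightarrow\PP^2$), so the mechanism by which you absorb the degree-$2$ factor is undefined; pulling back a genuine pencil along a $2:1$ cover would pair up the parameters $t_j$ and force coincidences among the $[v_j],[w_j]$, contradicting genericity. For rational normal kernel/cokernel curves $\kappa,\lambda$ in $\PP^m$ with prescribed values at prescribed parameters, Lemma~\ref{lemmaRNC} caps you at $m+2$ points, so you offer no construction for any $s>m+2$, which is almost the entire range to be covered. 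Even at $s=m+2$, the existence and dimension of the space of quadratic matrices $\mathbf M$ with $\mathbf M\kappa\equiv 0$ and $\lambda^\top\mathbf M\equiv 0$ is a nontrivial syzygy computation you have not carried out, and your final dimension comparison with $\max\{0,\,3(m+1)^2-s(2m+2)\}$ is entirely deferred. Until those steps are executed, the proposal does not establish defectivity for any $s$.
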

  \begin{proof}   For $\lceil { { 3(m+1)} \over 2}  \rceil  \leq s \leq2 m+1$  we already know
 that  $V$ is $s$-defective (see \cite[Section 3]{CGG3}).
 In fact the expected dimension of $V^s$ is $N$, but  there exist  form $f_1$ and $f_2$ of
multidegree
$(1,1,0)$ and  $(1,0,1)$, respectively,  passing through $s$ generic (simple) points,
 hence $f_1f_2$ is a form  of multidegree $(2,1,1)$  through $s$ generic 2-fat points, which was not supposed to exist.
\par
The case  $ m+2 \leq s \leq \lfloor { { 3(m+1)} \over 2}  \rfloor  $ was not previously known.
For such $s$  the expected dimension of $V^s$ is
\[ \exp \dim V^s  =N - s (2m+2)
.\]
Now
\[  \dim V^s =
\dim (I_{W\cup Z})_4
,\]
where  $W$ is a generic subscheme of $ \PP^{2m+1}  $ consisting of one double point $Q$ and
 two triple linear spaces $\Lambda_1, \Lambda_2$ of dimension $m-1$ , while $Z$ consists of $s$
 generic double points $P_1,
\dots,P_s$   (see \cite[Theorem 1.5]{CGG3}). Since  $\dim (I_{W})_4  =N =  3(m+1)^2 $, we have that
\[\dim (I_{W\cup Z})_4  =N - h , \]
where $h$ is the number of independent conditions imposed by the $s$ double points $P_i$ to
 the forms  of $(I_{W})_4$. Note that the expected value for $h$ is $s(2m+2)$.

By Theorem \ref{SEGREcurveprop} we know that there exists an rnc $\mathcal C$ maximally intersecting
the configuration $\Lambda_1 \cup  \Lambda_2 \cup Q \cup P_1 \cup \dots \cup P_s$. Let
$ F \in (I_{W\cup Z'})_4$, where $Z'$ consists of the $s-1$ double points  $P_1, \dots,P_{s-1} $ and
the simple point $P_s$. We have
\[ {\rm degree} \{F=0\} \cap \mathcal C \geq 6m+2s +1 \geq  6m+2(m+2)+1 =8m+5, \]
so by Bezout Theorem  $\mathcal C \subset \{ F=0\} $. Hence the
double point $P_s$ imposes less then $2m+2$ conditions to the
forms of $(I_{W\cup Z})_4$, so
\[  \dim V^s =   \dim (I_{W\cup Z})_4 =N-h> N -s(2m+2) =\exp \dim V^s
,\]
and the conclusion follows.
  \end{proof}

\bibliographystyle{alpha}
\bibliography{carlininew}

\end{document}